\newtheorem{theorem}{Theorem}[section]
\newtheorem{lemma}[theorem]{Lemma}
\newtheorem{proposition}[theorem]{Proposition}
\newtheorem{corollary}[theorem]{Corollary}
\theoremstyle{definition}
\newtheorem{definition}{Definition}[section]
\theoremstyle{remark}
\newtheorem{remark}{Remark}[section]
\theoremstyle{example}
\def\ol{\overline}
\def\lim{\mathop{\rm lim}\nolimits}
\def\colim{\mathop{\rm colim}\nolimits}
\def\Spd{\mathop{\rm Spd}}
\def\Bun{\mathrm{Bun}}
\def\Perf{\mathrm{Perf}}
\def\dim{\mathrm{dim}}
\def\Sht{\mathrm{Sht}}
\def\Rep{\mathrm{Rep}}
\def\D{\mathrm{D}}
\newcommand{\Dlis}{\mathrm{D}_{\mathrm{lis}}}
\title{Zelevinsky Duality on Basic Local Shimura Varieties}
\author{Linus Hamann}
\begin{document}
\maketitle
\textbf{Abstract.} We give a simple proof of a general result describing the action of the Zelevinsky involution on the cohomology of certain basic local Shimura varieties, using the machinery of Fargues-Scholze \cite{FS}. As an application, we generalize earlier results of Fargues \cite{Fa} and Mieda \cite{M} on the action of the Zelevinsky involution on the cohomology of $\mathrm{GL}_{n}$ and $\mathrm{GSp}_{4}$ type basic local Shimura varieties, respectively.
\tableofcontents
\section*{Acknowledgements} I would like to thank David Hansen for helpful discussions, Peter Scholze and the referee for pointing out some small errors, and the MPIM Bonn for their hospitality during part of the completion of this project. 
\section*{Notation}
\begin{enumerate}
    \item Let $\ell \neq p$ be distinct primes.
    \item Let $G/\mathbb{Q}_{p}$ be a quasi-split connected reductive group over the $p$-adic numbers. 
    \item Fix $T \subset B \subset G$ a maximal torus and Borel of $G$.
    \item Let $\rho_{G}$ denote the half sum of all positive roots of $G$.
    \item Let $Z(G)$ denote the center of $G$.
    \item Set $\Breve{\mathbb{Q}}_{p}$ to be the completion of the maximal unramified extension of $\mathbb{Q}_{p}$, with Frobenius $\sigma$.
    \item Set $\mathbb{C}_{p} := \hat{\overline{\mathbb{Q}}}_{p}$ to be the completion of the algebraic closure of $\mathbb{Q}_{p}$.
    \item Let $B(G) = G(\Breve{\mathbb{Q}}_{p})/(g \sim hg\sigma(h)^{-1})$ denote the Kottwitz set of $G$.
    \item For an element $b \in B(G)$, we let $J_{b}$ denote the $\sigma$-centralizer of $b$.
    \item We let $\mathbb{C}_{p}^{\flat} := \lim_{x \mapsto x^{p}} \mathbb{C}_{p}$ be the tilt of $\mathbb{C}_{p}$, we write $X$ for the (adic) Fargues-Fontaine curve defined by $\mathbb{C}_{p}^{\flat}$.
    \item To an element $b \in B(G)$, we let $\mathcal{E}_{b}$ be the associated $G$-bundle on $X$.
    \item Let $\Perf_{\overline{\mathbb{F}}_{p}}$ denote the category of perfectoid spaces in characteristic $p$ over the algebraic closure $\overline{\mathbb{F}}_{p}$ of $\mathbb{F}_{p}$ equipped with the $v$-topology. For $S \in \Perf_{\overline{\mathbb{F}}_{p}}$, we write $X_{S}$ for the relative (adic) Fargues-Fontaine curve over $S$.
    \item We let $\Bun_{G}$ denote the $v$-stack sending $S \in \Perf_{\overline{\mathbb{F}}_{p}}$ to the groupoid of $G$-bundles on $X_{S}$. 
    \item Let $\Dlis(\Bun_{G},\overline{\mathbb{Q}}_{\ell})$ (resp. $\Dlis(\Bun_{G},\overline{\mathbb{Q}}_{\ell})^{\omega}$) be the category of (resp. compact) lisse-\'etale $\overline{\mathbb{Q}}_{\ell}$-sheaves, as defined in \cite[Section~VII.6]{FS}.
    \item For a finite extension $E/\mathbb{Q}_{p}$, set $W_{E}$ to be the Weil group of $E$. We let $\Dlis(\Bun_{G},\overline{\mathbb{Q}}_{\ell})^{BW_{E}}$ be the set of objects with continuous $W_{E}$-action in $\Dlis(\Bun_{G},\overline{\mathbb{Q}}_{\ell})$, as defined in \cite[Section~IX.1]{FS}. 
    \item We fix a square root of $p$ in $\overline{\mathbb{Q}}_{\ell}$. All half Tate twists will be defined with respect to this choice.
\end{enumerate}
\section{Introduction}
Let $\mu$ be a geometric dominant cocharacter of $G$ with reflex field $E/\mathbb{Q}_{p}$, and set $b \in B(G,\mu)$ to be the unique basic element in the $\mu$-admissible locus (Definition 2.1). The triple $(G,b,\mu)$ defines a diamond \cite{SW}
\[ \Sht(G,b,\mu)_{\infty} \rightarrow \Spd(\Breve{E}) \]
parametrizing modifications $\mathcal{E}_{b} \rightarrow \mathcal{E}_{0}$ with meromorphy bounded by $\mu$ on $X$, where $\Breve{E}$ is the compositum $E\Breve{\mathbb{Q}}_{p}$. The space $\Sht(G,b,\mu)_{\infty}$ carries an action of $G(\mathbb{Q}_{p}) \times J_{b}(\mathbb{Q}_{p})$ and moreover carries a (non-effective) descent datum from $\Breve{E}$ down to $E$. This allows us to consider the tower of quotients
\[ \Sht(G,b,\mu)_{\infty}/\underline{K} =: \Sht(G,b,\mu)_{K} \]
for varying open compact $K \subset G(\mathbb{Q}_{p})$. If $\mu$ is minuscule, we say that the triple $(G,b,\mu)$ defines a basic local Shimura datum. In this case, the members of this tower are represented by rigid analytic spaces and define local Shimura varieties in the sense of Rapoport-Viehmann \cite{RV}. Let $\overline{\mathbb{Q}}_{\ell}$ be the algebraic closure of the $\ell$-adic numbers. The geometric Satake equivalence of Fargues-Scholze defines a $\mathbb{Z}_{\ell}$-sheaf $\mathcal{S}_{\mu}$ \cite[Chapter~VI]{FS} attached to $\mu$ on the spaces $\Sht(G,b,\mu)_{K}$ compatible with the actions of $G(\mathbb{Q}_{p})$ and $J_{b}(\mathbb{Q}_{p})$, and if $\mu$ is minuscule $\mathcal{S}_{\mu}$ is (up to a shift and twist) just the constant sheaf. Letting $\Sht(G,b,\mu)_{K,\mathbb{C}_{p}}$ denote the base-change of these spaces to $\mathbb{C}_{p}$, we can now define the complex
\[ R\Gamma_{c}(G,b,\mu) := \colim_{K \rightarrow \{1\}} R\Gamma_{c}(\Sht(G,b,\mu)_{K,\mathbb{C}_{p}},\mathcal{S}_{\mu}) \otimes \overline{\mathbb{Q}}_{\ell}\]
of $G(\mathbb{Q}_{p}) \times J_{b}(\mathbb{Q}_{p}) \times W_{E}$-modules. It is an important problem to describe the cohomology of these complexes as they are known to realize instances of the local Langlands correspondences and can be related to the cohomology of global Shimura varieties via uniformization in the case that $\mu$ is minuscule. More specifically, for $\pi$ (resp. $\rho$) a smooth irreducible representation of $G(\mathbb{Q}_{p})$ (resp. $J_{b}(\mathbb{Q}_{p})$) one wants to understand the $\pi$ (resp. $\rho$)-isotypic part. I.e the complexes
\[ R\Gamma_{c}(G,b,\mu)[\pi] := R\Gamma_{c}(G,b,\mu) \otimes^{\mathbb{L}}_{\mathcal{H}(G)} \pi \]
and
\[ R\Gamma_{c}(G,b,\mu)[\rho] := R\Gamma_{c}(G,b,\mu) \otimes^{\mathbb{L}}_{\mathcal{H}(J_{b})} \rho \]
where $\mathcal{H}(G) := C^{\infty}_{c}(G(\mathbb{Q}_{p}),\overline{\mathbb{Q}}_{\ell})$ (resp. $\mathcal{H}(J_{b})$) is the usual smooth Hecke algebra of $G$ (resp. $J_{b}$). It is a recent result of Fargues-Scholze that $R\Gamma_{c}(G,b,\mu)[\pi]$ (resp. $R\Gamma_{c}(G,b,\mu)[\rho]$) are valued in admissible $J_{b}(\mathbb{Q}_{p})$ (resp. $G(\mathbb{Q}_{p})$)-representations of finite length carrying a smooth action of $W_{E}$ \cite[Page~317]{FS}. Moreover, it is concentrated in degrees $-d \leq i \leq d$, where $d = \dim(\Sht(G,b,\mu)_{\infty}) = \langle 2\rho_{G},\mu \rangle$.
\\\\
Let $\phantom{}^{L}G$ be the Langlands dual group of $G$. In instances where the local Langlands correspondences are known and the $L$-parameter $\phi: W_{E} \times \mathrm{SL}_{2}(\mathbb{C}) \rightarrow \phantom{}^{L}G(\mathbb{C})$ associated to $\pi$ and $\rho$ is supercuspidal (i.e the restriction to the $\mathrm{SL}_{2}(\mathbb{C})$-factor acts trivially and $\phi$ does not factor through a proper Levi subgroup of $\phantom{}^{L}G$) it is expected that $R\Gamma_{c}(G,b,\mu)[\pi]$ (resp. $R\Gamma_{c}(G,b,\mu)[\rho]$) should be concentrated in middle degree $0$ \cite[Theorem~1.1]{Han} and valued in representations lying in the $L$-packet $\Pi_{\phi}(J_{b})$ (resp. $\Pi_{\phi}(G)$). Even in this case, verifying this expectation is a very difficult problem and forms the content of the Kottwitz conjecture, \cite[Conjecture~7.3]{RV}.
\\\\
In this note, we would like to prove a result that concerns the structure of the cohomology of $R\Gamma_{c}(G,b,\mu)[\pi]$ and $R\Gamma_{c}(G,b,\mu)[\rho]$ when $\pi$ or $\rho$ is a supercuspidal, but do not necessarily have supercuspidal $L$-parameter. This is the case in which the $L$-parameter is discrete but the $\mathrm{SL}_{2}$-factor acts non-trivially. This is a much more difficult problem. Even for $G = \mathrm{GL}_{n}$, $J_{b}$ an inner form of $\mathrm{GL}_{n}$, and $\rho = \mathbf{1}$ the trivial representation, the cohomology of the complex $R\Gamma_{c}(G,b,\mu)[\rho]$ is concentrated in multiple different degrees \cite{SS2,Dat}, and seems, in general, to be valued in representations lying in both the $L$-packet $\Pi_{\phi}(G)$ and the $A$-packet $\Pi_{\psi}(G)$ \cite{I,IM}, where $\psi$ is the $A$-parameter:
\[ \psi: W_{\mathbb{Q}_{p}} \times \mathrm{SL}_{2}(\mathbb{C}) \times \mathrm{SL}_{2}(\mathbb{C}) \rightarrow W_{\mathbb{Q}_{p}} \times \mathrm{SL}_{2}(\mathbb{C}) \times \mathrm{SL}_{2}(\mathbb{C}) \xrightarrow{\phi} \phantom{}^{L}G(\mathbb{C}) \]
Here the first map is given by swapping the $\mathrm{SL}_{2}$-factors and the second map is given by trivially extending $\phi$ on the second factor. To aid in understanding the cohomology in this case, one considers the (derived) Zelevinsky involution. 
\begin{definition}
For $G/\mathbb{Q}_{p}$ a connected reductive group, let $\mathcal{D}_{c}(G)$ be the convolution algebra of compactly supported $\overline{\mathbb{Q}}_{\ell}$-valued functions on $G(\mathbb{Q}_{p})$. It contains the usual smooth Hecke algebra $\mathcal{H}(G)$ as a sub-algebra. We let  $\D(\mathcal{D}_{c}(G),\ol{\mathbb{Q}}_{\ell})$ (resp. $\D(G(\mathbb{Q}_{p}),\ol{\mathbb{Q}}_{\ell})$) denote the unbounded derived category of $\mathcal{D}_{c}(G)$-modules (resp. smooth $G(\mathbb{Q}_{p})$-modules). If we let $e_{K} \in \mathcal{H}(G) \subset \mathcal{D}_{c}(G)$ be the idempotent for the Hecke algebra defined by a compact open subgroup $K \subset G(\mathbb{Q}_{p})$ then we can consider the derived functor
\[ \infty_{\mathcal{D}}: \D(\mathcal{D}_{c}(G),\overline{\mathbb{Q}}_{\ell}) \rightarrow  \D(G(\mathbb{Q}_{p}),\overline{\mathbb{Q}}_{\ell}) \] 
\[ A \mapsto \colim_{K \rightarrow \{1\}} e_{K}A \]
which is in particular exact and so has no higher derived functors (See \cite[Section~2]{M} and \cite[Section~2]{Fa} for details). Then we define the (derived) Zelevinsky involution to be 
\[ \mathbb{D}_{G}: \D(G(\mathbb{Q}_{p}),\overline{\mathbb{Q}}_{\ell})^{op} \rightarrow \D(G(\mathbb{Q}_{p}),\overline{\mathbb{Q}}_{\ell}) \]
\[ A \mapsto \infty_{\mathcal{D}} \circ \mathcal{R}Hom_{\mathcal{H}(G)}(A,\mathcal{H}(G)) \]
where $\mathcal{H}(G)$ is viewed as a Hecke bi-module. 
\end{definition}
On the subcategory of compact objects, this defines a contravariant equivalence and has the property that $\mathbb{D}_{G}^{2} \simeq Id$. In this note, we seek to describe the complex $\mathbb{D}_{G}(R\Gamma_{c}(G,b,\mu)[\rho])$ and $\mathbb{D}_{J_{b}}(R\Gamma_{c}(G,b,\mu)[\pi])$. This is desirable since (up to contragradients and shifts) $\mathbb{D}_{G}$ should exchange the $L$-packet $\Pi_{\phi}(G)$ and the $A$-packet $\Pi_{\psi}(G)$ \cite{Hir}, so better understanding the structure of this complex should give insight into the complicated way these different representations appear and interact with each other in the cohomology of $R\Gamma_{c}(G,b,\mu)[\rho]$. As a first step towards this goal, we would like to prove the existence of isomorphisms
\[ \mathbb{D}_{J_{b}}(R\Gamma_{c}(G,b,\mu)[\pi]) \simeq R\Gamma_{c}(G,b,\mu)[\mathbb{D}_{G}(\pi)]\]
and
\[ \mathbb{D}_{G}(R\Gamma_{c}(G,b,\mu)[\rho]) \simeq R\Gamma_{c}(G,b,\mu)[\mathbb{D}_{J_{b}}(\rho)]\]
as complexes of $J_{b}(\mathbb{Q}_{p}) \times W_{E}$ and $G(\mathbb{Q}_{p}) \times W_{E}$-modules, respectively.
\\\\
In general, this shouldn't be true, but, under a Hodge-Newton reducibility condition, we show that it is. In particular, our main theorem is as follows.
\begin{theorem}
For a basic local Shimura datum $(G,b,\mu)$ such that the set $B(G,\mu)$ is Hodge-Newton reducible (Definition 2.2) and $\pi$ (resp. $\rho$) a supercuspidal representation of $G(\mathbb{Q}_{p})$ (resp. $J_{b}(\mathbb{Q}_{p})$), we have  isomorphisms
\[ \mathbb{D}_{J_{b}}(R\Gamma_{c}(G,b,\mu)[\pi]) \simeq R\Gamma_{c}(G,b,\mu)[\mathbb{D}_{G}(\pi)] \]
and 
\[ \mathbb{D}_{G}(R\Gamma_{c}(G,b,\mu)[\rho]) \simeq R\Gamma_{c}(G,b,\mu)[\mathbb{D}_{J_{b}}(\rho)] \]
as complexes of $J_{b}(\mathbb{Q}_{p}) \times W_{E}$ and $G(\mathbb{Q}_{p}) \times W_{E}$-modules, respectively. 
\end{theorem}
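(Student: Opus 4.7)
The plan is to recast both sides of the claimed isomorphism in the Fargues-Scholze framework and reduce to a compatibility between a Verdier-style duality on $\Dlis(\Bun_{G},\overline{\mathbb{Q}}_{\ell})^{\omega}$ and the Hecke operator $T_{\mu}$ attached to $\mu$ via geometric Satake. Using the spectral action together with the identifications of the two relevant Newton strata, namely $\Bun_{G}^{1} \simeq [*/G(\mathbb{Q}_{p})]$ and $\Bun_{G}^{b} \simeq [*/J_{b}(\mathbb{Q}_{p})]$ (heuristically; the precise statement uses the extended groups of \cite{FS}), one has natural identifications
\[ R\Gamma_{c}(G,b,\mu)[\pi] \simeq i_{b}^{*}\,T_{\mu}\,i_{1,!}(\pi), \qquad R\Gamma_{c}(G,b,\mu)[\rho] \simeq i_{1}^{*}\,T_{\mu^{*}}\,i_{b,!}(\rho) \]
(up to standard shifts and half-Tate twists), where $i_{1}, i_{b}$ are the inclusions of the two strata and $\mu^{*} := -w_{0}\mu$ denotes the dual cocharacter.

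\textbf{Duality exchange.} The six-functor formalism on $\Bun_{G}$ supplies a duality $\mathbb{D}$ on $\Dlis(\Bun_{G},\overline{\mathbb{Q}}_{\ell})^{\omega}$ which (i) restricts on each stratum $[*/J_{b'}(\mathbb{Q}_{p})]$ to the derived Zelevinsky involution $\mathbb{D}_{J_{b'}}$ up to a twist by the appropriate modulus character and a shift, (ii) exchanges $i_{?,!}$ with $i_{?,*}$ and $i_{?}^{*}$ with $i_{?}^{!}$, and (iii) satisfies $\mathbb{D} \circ T_{\mu} \simeq T_{\mu^{*}} \circ \mathbb{D}$ via the Chevalley involution on the geometric Satake category. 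Applying $\mathbb{D}_{J_{b}}$ to the first identification above therefore gives
\[ \mathbb{D}_{J_{b}}\bigl(R\Gamma_{c}(G,b,\mu)[\pi]\bigr) \simeq i_{b}^{!}\,T_{\mu^{*}}\,i_{1,*}(\mathbb{D}_{G}\pi), \]
so the task reduces to identifying this with $i_{b}^{*}\,T_{\mu}\,i_{1,!}(\mathbb{D}_{G}\pi) \simeq R\Gamma_{c}(G,b,\mu)[\mathbb{D}_{G}\pi]$; the second isomorphism of the theorem then follows by the same argument with the roles of $(1,\pi,G)$ and $(b,\rho,J_{b})$ swapped.

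\textbf{Hodge-Newton reduction and main obstacle.} The remaining identification is where both hypotheses enter and constitutes the main obstacle. Hodge-Newton reducibility of $B(G,\mu)$ forces every element of $B(G,\mu)$ other than $1$ and $b$ to be carried by a proper Levi $M \subsetneq G$, while the supercuspidality of $\pi$ makes all Jacquet modules of $\pi$ along such proper parabolics vanish. A weight-filtration analysis of $V_{\mu}$ relative to the HN-Levi of any given non-basic stratum then shows that $T_{\mu}\,i_{1,!}(\pi)$ is supported on the two-stratum closed subset $\Bun_{G}^{1} \cup \Bun_{G}^{b}$. On this support, the distinction between $!$- and $*$-extensions is controlled by a single open-to-closed boundary map, which a second application of supercuspidality (via the same Jacquet-module vanishing, now localized at the boundary) collapses to the expected modulus-character twist. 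The geometric ingredients in the first two steps are formal consequences of \cite{FS} and the Chevalley involution on Satake, so the essential content of the argument lies in this combined stratum-support and boundary-vanishing analysis.
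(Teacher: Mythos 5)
Your overall strategy is the right one — geometrize via a Hecke operator and the strata inclusions on $\Bun_G$, apply a duality on $\Dlis(\Bun_G,\overline{\mathbb{Q}}_\ell)^{\omega}$, and use Hodge-Newton reducibility plus supercuspidality to control supports — but the ``duality exchange'' step contains a genuine error that propagates through the rest of your argument. You posit a duality $\mathbb{D}$ on $\Dlis(\Bun_G)^{\omega}$ that simultaneously (i) restricts on each stratum to the derived Zelevinsky involution $\mathbb{D}_{J_{b'}}$ and (ii) exchanges $i_{?,!}$ with $i_{?,*}$. No such functor exists: (ii) characterizes Verdier duality, which restricts on $[\ast/J_{b'}(\mathbb{Q}_p)]$ to the smooth contragredient, not the Zelevinsky involution, while the Bernstein--Zelevinsky duality $\mathbb{D}_{BZ}$ of \cite[Sections V.5, VII.7]{FS} — the functor that actually does restrict to $\mathbb{D}_{J_{b'}}$ — \emph{preserves} $j_{b'!}$ rather than swapping it with $j_{b'*}$ (this is \cite[Proposition VII.7.6]{FS}). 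You have conflated the two dualities, and the Aubert/Schneider--Stuhler relation between them (a Zelevinsky twist of the contragredient) is precisely the nontrivial content here, not a ``twist by the appropriate modulus character.''

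This confusion creates the spurious $!$-versus-$*$ mismatch $i_b^!T_{\mu^*}i_{1,*}$ against $i_b^*T_{\mu}i_{1,!}$ that your final paragraph tries to patch with a boundary map. Using $\mathbb{D}_{BZ}$ there is no such mismatch at all: once one knows $T_{\mu^{-1}}j_{1!}(\mathcal{F}_\pi)$ is supported \emph{only on the single open stratum} $\Bun_G^b$, one has $T_{\mu^{-1}}j_{1!}(\mathcal{F}_\pi) \simeq j_{b!}j_b^*T_{\mu^{-1}}j_{1!}(\mathcal{F}_\pi)$, and $\mathbb{D}_{BZ}$ then passes through $j_{b!}$, through $T_{\mu^{-1}}$ \cite[Theorem I.7.2]{FS}, and through $j_{1!}$, yielding the theorem with no further geometric input. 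Your support claim is also off: the support should be contained in $|\Bun_G|\simeq B(G)$ restricted to $B(G,\mu)$, which in general does not contain the trivial element $1$ (since $\kappa(1)=0\neq\mu^\flat$), so ``the two-stratum closed subset $\Bun_G^1\cup\Bun_G^b$'' is not what you want; the paper's Corollary 2.2 shows the restriction to \emph{every} non-basic $b'\in B(G,\mu)$ vanishes (via Proposition 2.1 and the fact that supercuspidal $\pi$ kills parabolically induced complexes), leaving support exactly on $\Bun_G^b$. In short: replace your hypothesized duality by $\mathbb{D}_{BZ}$, replace the $!/\!*$ and boundary analysis by the single-stratum support statement, and the argument closes.
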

\begin{remark}
A similar result should be provable through our methods with integral or torsion coefficients, for $\rho$ any supercuspidal $\ell$-complete or $\ell$-modular representation, respectively. We note that in the above we have assumed $\mu$ is minuscule; however, we conjecture that this is true even in the non-minuscule case.
\end{remark}
The condition that the set $B(G,\mu)$ is Hodge-Newton reducible roughly says that, for every non-basic element $b' \in B(G,\mu)$, the Newton polygon of $b'$ seen as an element of a positive Weyl chamber touches the Hodge polygon defined by $\mu$ away from the endpoints. We note that there is a complete group-theoretic classification of pairs $(G,\mu)$ such that $B(G,\mu)$ is Hodge-Newton reducible \cite[Theorem~2.5]{GHN}.
\\\\
We explain the idea of proof for the $\pi$-isotypic part with the proof for the $\rho$-isotypic part being analogous. The key insight is to consider the geometric analogue of the Zelevinsky involution acting on sheaves on the moduli stack $\Bun_{G}$
\[ \mathbb{D}_{BZ}: \Dlis(\Bun_{G},\overline{\mathbb{Q}}_{\ell})^{\omega,op} \rightarrow  \Dlis(\Bun_{G},\overline{\mathbb{Q}}_{\ell})^{\omega} \]
as defined in \cite[Section~V.5,Section~VII.7]{FS}. It defines a contravariant equivalence of $\Dlis(\Bun_{G},\overline{\mathbb{Q}}_{\ell})^{\omega}$ and has the property that $\mathbb{D}_{BZ}^{2}$ is naturally isomorphic to the identity. Moreover, when restricted to the subcategories $\D(J_{b}(\mathbb{Q}_{p}),\overline{\mathbb{Q}}_{\ell})^{\omega}$ coming from basic Harder-Narasimhan(=HN)-strata of $\Bun_{G}$, it is given by $\mathbb{D}_{J_{b}}$. Now, with this in hand, it is useful to reinterpret the complex $R\Gamma_{c}(G,b,\mu)[\pi]$ as 
\[ j_{b}^{*}T_{\mu^{-1}}j_{1!}(\mathcal{F}_{\pi}) \simeq R\Gamma_{c}(G,b,\mu)[\pi] \]
where $j_{1}$ and $j_{b}$ are the inclusions of the HN-strata of $\Bun_{G}$ corresponding to the trivial element $1 \in B(G)$ and $b \in B(G)$, respectively, $\mathcal{F}_{\pi} \in \D(G(\mathbb{Q}_{p}),\overline{\mathbb{Q}}_{\ell})$ is the sheaf on the basic HN-stratum $\Bun_{G}^{1} \simeq [\ast/\underline{G(\mathbb{Q}_{p})}]$ defined by $\pi$, and $T_{\mu^{-1}}$ is the Hecke operator associated to $\mu^{-1}$ a dominant inverse of $\mu$ acting on $\Dlis(\Bun_{G},\overline{\mathbb{Q}}_{\ell})$. Given this isomorphism, Theorem 1.1 would now follow from knowing that $\mathbb{D}_{BZ}$ commutes with all operations applied to $\mathcal{F}_{\pi}$ on the LHS. For $j_{b!}$ and the Hecke operator $T_{\mu^{-1}}$, this is shown by Fargues-Scholze. The key point is to show that it commutes with the restriction functor $j_{1}^{*}$. In general, this is not true if the complex $T_{\mu^{-1}}j_{1!}(\mathcal{F}_{\pi})$ is supported on non-basic HN-strata, but in the case that $B(G,\mu)$ is Hodge-Newton reducible and $\pi$ is supercuspidal we show that this cannot occur. The key point is that the restriction of $T_{\mu^{-1}}j_{1!}(\mathcal{F}_{\pi})$ to any non-basic HN-strata can be described in terms of the $\pi$-isotypic part of $R\Gamma_{c}(G,b',\mu)$ for non-basic $b' \in B(G,\mu)$. However, by Hodge-Newton reducibility, the space $\Sht(G,b',\mu)_{\infty}$ is parabolically induced as a space with $G(\mathbb{Q}_{p})$-action from a proper parabolic of $G$, realizing some form of the Harris-Viehmann conjecture \cite[Conjecture~8.5]{RV}. Therefore, since $\pi$ is supercuspidal, it cannot contribute to the cohomology of these spaces, so the restriction vanishes. 
\\\\
With Theorem 1.1 in hand, we can start to derive some useful cohomological consequences. Let $\chi$ denote the central character of our fixed supercuspidal $\pi$. Then the complex $R\Gamma_{c}(G,b,\mu)[\pi]$ has cohomology valued in $J_{b}(\mathbb{Q}_{p})$-representations with central character equal to $\chi$, where we recall that $J_{b}$ is an extended pure inner form of $G$ and therefore we have an isomorphism: $Z(J_{b}) \simeq Z(G)$. Let $I_{\chi}(J_{b})$ denote the set of equivalence classes of inertial cuspidal data $\mathfrak{s} = (M,\sigma)$, where $M \subset J_{b}$ is a Levi subgroup of $J_{b}$ and $\sigma$ is a supercuspidal representation with $\sigma|_{Z(G)(\mathbb{Q}_{p})} = \chi$. We can consider the Bernstein decomposition 
\[ R\Gamma_{c}(G,b,\mu)[\pi] \simeq \bigoplus_{\mathfrak{s} \in I_{\chi}(G)}  R\Gamma_{c}(G,b,\mu)[\pi]_{\mathfrak{s}} \]
where the cohomology of $R\Gamma_{c}(G,b,\mu)[\pi]_{\mathfrak{s}}$ takes values in representations with supercuspidal support specified by $\mathfrak{s}$. On the other hand, since $\pi$ is supercuspidal, $R\Gamma_{c}(G,b,\mu)[\mathbb{D}_{J_{b}}(\pi)]$ is, up to a shift, isomorphic to $R\Gamma_{c}(G,b,\mu)[\pi^{*}]$, where $\pi^{*}$ is the contragradient of $\pi$. Therefore, this complex takes values in representations with central character equal to $\chi^{-1}$. Hence, we again obtain a Bernstein decomposition:
\[ R\Gamma_{c}(G,b,\mu)[\pi^{*}] \simeq \bigoplus_{\mathfrak{s} \in I_{\chi^{-1}}(J_{b})} R\Gamma_{c}(G,b,\mu)[\pi]_{\mathfrak{s}} \]
Fixing now $\mathfrak{s} = (M,\sigma) \in I_{\chi}(J_{b})$, we let $\mathfrak{s}^{*} = (M,\sigma^{*}) \in I_{\chi^{-1}}(J_{b})$. Using Theorem 1.1, one can show that one has (up to a shifts) an isomorphism
\[ \mathbb{D}_{J_{b}}(R\Gamma_{c}(G,b,\mu)[\pi]_{\mathfrak{s}}) \simeq R\Gamma_{c}(G,b,\mu)[\pi^{*}]_{\mathfrak{s}^{*}} \]
of complexes of $J_{b}(\mathbb{Q}_{p}) \times W_{E}$-modules. 
To a given $\mathfrak{s}$, we let $i(\mathfrak{s}) := r_{G} - r_{M}$, where $r_{G}$ (resp. $G$) is the split semisimple rank of $G$ (resp. $M$). If one has an irreducible smooth representation $\rho$, then the cohomology of $\mathbb{D}_{J_{b}}(\rho)$ sits in a single degree determined by $i(\mathfrak{s})$. We can define $Zel(\rho)$ (Definition 3.1) to be the representation determined by the non-zero cohomology. As a consequence of this, we can deduce the following result from Theorem 1.1 by passing to cohomology.
\begin{theorem}
For a basic local Shimura datum $(G,b,\mu)$ such that $B(G,\mu)$ is Hodge-Newton reducible, let $\pi$ be a supercuspidal representation of $G(\mathbb{Q}_{p})$ with central character $\chi$, $\mathfrak{s} \in I_{\chi}(J_{b})$ with associated integer $i(\mathfrak{s})$, $\rho$ a smooth irreducible representation with supercuspidal support given by $\mathfrak{s}$, and $\sigma$ an irreducible representation of $W_{E}$. Then $\pi \boxtimes \rho \boxtimes \sigma$ occurs as a subquotient of $H^{q}(R\Gamma_{c}(G,b,\mu))$ if and only if $\pi^{*} \boxtimes Zel(\rho) \boxtimes \sigma^{*}$ occurs as a subquotient of $H^{-q + i(\mathfrak{s})}(R\Gamma_{c}(G,b,\mu))$. The analogous statement is true with the roles of $\pi$ and $\rho$ interchanged. 
\end{theorem}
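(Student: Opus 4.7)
The plan is to deduce Theorem~1.2 from Theorem~1.1 by decomposing the $\pi$-isotypic part along Bernstein blocks and then passing from the resulting derived isomorphism to a statement about subquotients of cohomology.

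First, since $\pi$ is supercuspidal with inertial class $(G,\pi)$ of index $i(G,\pi)=0$, the derived dual $\mathbb{D}_{G}(\pi)$ is concentrated in degree zero and isomorphic to the contragredient $\pi^{*}$ (up to a fixed shift). Plugging into the first isomorphism of Theorem~1.1 yields, up to this shift,
\[ \mathbb{D}_{J_{b}}(R\Gamma_{c}(G,b,\mu)[\pi]) \simeq R\Gamma_{c}(G,b,\mu)[\pi^{*}] \]
of $J_{b}(\mathbb{Q}_{p})\times W_{E}$-modules, with the $W_{E}$-action on the right the dual of that on the left. Decomposing both sides via the Bernstein decomposition on the $\mathcal{H}(J_{b})$-side, and using that $\mathbb{D}_{J_{b}}$ permutes blocks via $\mathfrak{s}\mapsto\mathfrak{s}^{*}$, this refines to
\[ \mathbb{D}_{J_{b}}(R\Gamma_{c}(G,b,\mu)[\pi]_{\mathfrak{s}}) \simeq R\Gamma_{c}(G,b,\mu)[\pi^{*}]_{\mathfrak{s}^{*}} \]
for each $\mathfrak{s}\in I_{\chi}(J_{b})$.

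Next, I would pass to cohomology using the key block-wise property of the Zelevinsky involution: for any finite length representation $V$ with supercuspidal support contained in $\mathfrak{s}$, $\mathbb{D}_{J_{b}}(V)$ is concentrated in the single cohomological degree determined by $i(\mathfrak{s})$, as noted in the paragraph preceding Definition~3.1. Equivalently, the shifted functor $\mathbb{D}_{J_{b}}[i(\mathfrak{s})]$ restricts to an exact contravariant equivalence from the $\mathfrak{s}$-block to the $\mathfrak{s}^{*}$-block, sending an irreducible $\rho$ to $Zel(\rho)$. Since all cohomology of $R\Gamma_{c}(G,b,\mu)[\pi]_{\mathfrak{s}}$ lies in the block $\mathfrak{s}$, exactness yields
\[ H^{-q+i(\mathfrak{s})}(R\Gamma_{c}(G,b,\mu)[\pi^{*}]_{\mathfrak{s}^{*}}) \simeq Zel(H^{q}(R\Gamma_{c}(G,b,\mu)[\pi]_{\mathfrak{s}})) \]
with dualized $W_{E}$-action.

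Finally, since $\pi$ is supercuspidal, tensoring with $\pi$ over $\mathcal{H}(G)$ picks out precisely the irreducible subquotients of $H^{q}(R\Gamma_{c}(G,b,\mu))$ of the form $\pi\boxtimes(-)$ (and similarly for $\pi^{*}$ on the right side), while $Zel$, being an exact involution on the block, carries irreducible subquotients bijectively to irreducible subquotients. Combining these observations gives the desired equivalence between $\pi\boxtimes\rho\boxtimes\sigma$ appearing in $H^{q}(R\Gamma_{c}(G,b,\mu))$ and $\pi^{*}\boxtimes Zel(\rho)\boxtimes\sigma^{*}$ appearing in $H^{-q+i(\mathfrak{s})}(R\Gamma_{c}(G,b,\mu))$; the analogous statement with the roles of $\pi$ and $\rho$ interchanged follows by running the identical argument with the second isomorphism of Theorem~1.1. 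The main obstacle will be the careful bookkeeping of cohomological shifts and Tate twists, and in particular verifying that the derived $\mathbb{D}_{J_{b}}$ really does collapse to a single cohomological degree on each Bernstein block so that the passage from the derived isomorphism to a statement about subquotients of cohomology is valid; once these points are in hand, the rest of the argument is essentially formal.
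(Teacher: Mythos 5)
Your argument follows essentially the same route as the paper's: combine Theorem~1.1 with $\mathbb{D}_{G}(\pi)\simeq\pi^{*}[-r]$ (Lemma~3.3) and the Bernstein decomposition to obtain the block-wise identity of Corollary~3.4, pass to cohomology via the concentration of $\mathbb{D}_{J_{b}}$ in a single degree on each block (Proposition~3.1 and Definition~3.1), and use projectivity/injectivity of the supercuspidal $\pi$ to move between subquotients of $R\Gamma_{c}(G,b,\mu)$ and of $R\Gamma_{c}(G,b,\mu)[\pi]$. The shift bookkeeping you defer is precisely what Lemma~3.3 and Corollary~3.4 record; the only cosmetic difference is that the paper applies the duality to $\pi^{*}$ and $\mathfrak{s}^{*}$ and unwinds, whereas you apply it directly to $\pi$ and $\mathfrak{s}$.
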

\begin{remark}
\begin{enumerate}
    \item Using this result, one should be able to make some more precise claims about the cohomological degrees that the complex $R\Gamma_{c}(G,b,\mu)[\pi]$ (resp. $R\Gamma_{c}(G,b,\mu)[\rho]$) sits in for $\pi$ (resp. $\rho$) supercuspidal. For example, conjecturally the spaces $\Sht(G,b,\mu)_{K}$ should be Stein (see \cite[Section~1.3]{Han} for a discussion of this) in the sense that the complex $R\Gamma_{c}(\Sht(G,b,\mu)_{K},\mathcal{S}_{\mu})$ should be concentrated in degrees $0 \leq i \leq d$.\footnote{We note that in the minuscule case $\mathcal{S}_{\mu} \simeq \mathbb{Z}_{\ell}[d](\frac{d}{2})$.} As a consequence, since $\pi$ (resp. $\rho$) supercuspidal is injective/projective in the category of representations with fixed central character, we know that $R\Gamma_{c}(G,b,\mu)[\pi]$ (resp. $R\Gamma_{c}(G,b,\mu)[\rho]$) is concentrated in degrees $0 \leq i \leq d$, but then, by Theorem 1.2, this is enough to conclude that $R\Gamma_{c}(G,b,\mu)[\pi]_{\mathfrak{s}}$ (resp. $R\Gamma_{c}(G,b,\mu)[\rho]_{\mathfrak{s}}$) can only be concentrated in degrees $0 \leq i \leq i(\mathfrak{s})$. 
    \item In the case that $L/\mathbb{Q}_{p}$ is a finite extension, $G = Res_{L/\mathbb{Q}_{p}}\mathrm{GL}_{n}$ is the restriction of scalars of $\mathrm{GL}_{n}$, and $\mu$ the Lubin-Tate cocharacter, this is a result of Fargues \cite[Theorem~4.6]{Fa}. Similarly, for $G = \mathrm{GSp}_{4}/\mathbb{Q}_{p}$, and $\mu$ the Siegel cocharacter, this is is a result of Mieda \cite[Corollary~1.2 (ii)]{M}. 
    \item Unlike the arguments of Fargues and Mieda, we completely avoid considering integral models of the local Shimura varieties, working solely on the generic fiber. This substantially simplifies the proof, allowing one to completely avoid using nearby cycles and studying the special fiber, albeit at the cost of using much more sophisticated geometric tools. 
    \item In the case that $G = \mathrm{GSp}_{4}/\mathbb{Q}_{p}$, $\mu = (1,1,0,0)$, $\phi$ is a discrete (not necessarily supercuspidal) $L$-parameter with associated $A$-parameter $\psi$ as above, and $\rho$ a (not necessarily supercuspidal) representation in the $L$-packet or $A$-packet over $\phi$ or $\psi$, respectively, Ito and Mieda \cite{IM} have leveraged this result in conjunction with basic uniformization and global results to fully understand the cohomology of the summand $R\Gamma_{c}(G,b,\mu)[\rho]_{sc}$ of $R\Gamma_{c}(G,b,\mu)[\rho]$, where $G(\mathbb{Q}_{p})$ acts via a supercuspidal representation, in terms of members of the $L$-packet and $A$-packet $\Pi_{\phi}(G)$ and $\Pi_{\psi}(G)$. It would be interesting to carry out their analysis in other Hodge-Newton reducible situations. For example, in the case where $G = GU(1,n - 1)/\mathbb{Q}_{p}$ is an unramified unitary group and $\mu = (1,0,\ldots,0,0)$. The form of the cohomology in the case that $n = 3$ is partially described by \cite{I}. 
    \end{enumerate}
\end{remark}
In section 2, we discuss Hodge-Newton reducibility and prove Theorem 1.1. In section 3, we discuss the Zelevinsky involution in greater detail and show how to deduce Theorem 1.2 from Theorem 1.1. 
\section{Proof of the Main Theorem}
In this section, we prove Theorem 1.1. We first address the notion of Hodge-Newton reducibility and derive a key geometric consequence. We recall that the Kottwitz set $B(G)$ is equipped with two maps.
\begin{itemize}
    \item The slope homomorphism
    \[ \nu: B(G) \rightarrow X_*(T_{\overline{\mathbb{Q}}_{p}})^{+,\Gamma}_{\mathbb{Q}} \]
    \[ b \mapsto \nu_{b} \]
    where $\Gamma := \mathrm{Gal}(\overline{\mathbb{Q}}_{p}/\mathbb{Q}_{p})$ and 
    $X_*(T_{\overline{\mathbb{Q}}_{p}})_{\mathbb{Q}}^{+}$ is the set of rational dominant cocharacters of $G$. 
    \item The Kottwitz invariant
    \[ \kappa: B(G) \rightarrow \pi_{1}(G)_{\Gamma} \]
    where $\pi_1(G) = X_*(T_{\bar{\mathbb{Q}}_{p}})/X_*(T_{\bar{\mathbb{Q}}_{p},sc})$ for $T_{\bar{\mathbb{Q}}_{p},sc}$ the maximal simply connected subtorus of $T_{\bar{\mathbb{Q}}_{p}}$. 
\end{itemize}
Now, given a geometric cocharacter $\mu$ of $G$ with reflex field $E$, we can define the element:
\[ \tilde{\mu} := \frac{1}{[E:\mathbb{Q}_{p}]} \sum_{\gamma \in \mathrm{Gal}(E/\mathbb{Q}_{p})} \gamma(\mu) \in X_*(T_{\overline{\mathbb{Q}}_{p}})^{+,\Gamma}_{\mathbb{Q}} \]
We let $\mu^{\flat}$ be the image of $\mu$ in $\pi_1(G)_{\Gamma}$. This allows us to make the following definition. 
\begin{definition}
Let $b$ be an element in the Kottwitz set $B(G)$ of $G$, we define the set $B(G,\mu,b)$ to be set of $b' \in B(G)$ for which $\nu_{b'} - \nu_{b} \leq \tilde{\mu}$ with respect to the Bruhat ordering and $\kappa(b') - \kappa(b) = \mu^{\flat}$. If $b$ is the trivial element we will denote this by $B(G,\mu)$. 
\end{definition}
With this in hand, we can define the relevant notion of Hodge-Newton reducibility. 
\begin{definition}
For $b \in B(G)$, we say that $B(G,\mu,b)$ is Hodge-Newton reducible if, for all non-basic $b' \in B(G,\mu,b)$, there exists a proper Levi subgroup $M \subset G$ and $b_{M}, b'_{M} \in B(M)$ such that:
\begin{itemize}
    \item $b$ and $b'$ are in the images of $b_{M}$ and $b'_{M}$ under the natural map $B(M) \rightarrow B(G)$, respectively, 
    \item $b_{M} \in B(M,\mu,b_{M})$, for some fixed choice of conjugacy class of $\mu$ regarded as a cocharacter inside $M$\footnote{We note that the conjugacy class $\{\mu\}$ does not define a unique conjugacy class inside the Levi $M$, and it is important to fix some choice.}.
\end{itemize}
\end{definition}
\begin{remark}
We note that, since we assumed $G$ is quasi-split, by \cite[Lemma~4.11]{CFS}, $B(G,\mu)$ being Hodge-Newton reducible is equivalent to saying that $B(G,\mu)$ is fully Hodge-Newton decomposable in the sense of \cite[Definition~4.10]{CFS}. If $b \in B(G,\mu)$ denotes the unique basic element and $\mu^{-1}$ is a dominant inverse of $\mu$ then it follows that, if $B(G,\mu)$ is Hodge-Newton reducible, then $B(G,\mu^{-1},b)$ is also Hodge-Newton reducible in our sense by \cite[Corollary~4.15]{CFS} and \cite[Lemma~4.11]{CFS} (See the first part of the proof of \cite[Theorem~6.1]{CFS} for details). 
\end{remark}
We now seek to derive a key geometric consequence of this condition. Fix an element $b \in B(G)$ and let $b' \in B(G,\mu,b)$. We define the Shtuka space
\[ \Sht(G,b,b',\mu)_{\infty} \rightarrow \Spd(\Breve{E}) \]
to be the space parametrizing modifications 
\[ \mathcal{E}_{b'} \dashrightarrow \mathcal{E}_{b} \]
of $G$-bundles on the Fargues-Fontaine curve $X$ with meromorphy bounded by $\mu$. This has commuting actions of $J_{b}(\mathbb{Q}_{p})$ and $J_{b'}(\mathbb{Q}_{p})$ coming from automorphisms of $\mathcal{E}_{b}$ and $\mathcal{E}_{b'}$, respectively. We define as before the tower 
\[ \Sht(G,b,b',\mu)_{K} := \Sht(G,b,b',\mu)/\underline{K} \]
for varying open compact subgroups $K \subset J_{b}(\mathbb{Q}_{p})$. Assume that
$b$ is basic from now on. Then $J_{b}$ is an extended pure inner form of $G$, it follows that one has an isomorphism $\Bun_{G,\overline{\mathbb{F}}_{p}} \simeq \Bun_{J_{b},\overline{\mathbb{F}}_{p}}$  by mapping a $G$-bundle $\mathcal{E}$ on $X$ to the $J_{b}$-torsor of isomorphisms $\mathcal{I}som(\mathcal{E}_{b},\mathcal{E})$. This implies that the space of Shtukas $\Sht(G,b,b',\mu)_{\infty}$ is isomorphic to the space of Shtukas $\Sht(J_{b},b'b^{-1},\mu)_{\infty}$ for the group $J_{b}$. This isomorphism means that the cocharacter $\mu$ and the Geometric Satake correspondence for $J_{b}$ \cite[Chapter~VI]{FS} will give rise to a $\mathbb{Z}_{\ell}$-sheaf $\mathcal{S}_{\mu}$ on the space $\Sht(J_{b},b'b^{-1},\mu)_{\infty}$ and in turn on $\Sht(G,b,b',\mu)_{\infty}$ compatible with the actions of $J_{b}(\mathbb{Q}_{p})$ and $J_{b'}(\mathbb{Q}_{p})$. This allows us to define the complex
\[ R\Gamma_{c}(G,b,b',\mu) := \colim_{K \rightarrow \{1\}} R\Gamma_{c}(\Sht(G,b,b',\mu)_{K,\mathbb{C}_{p}},\mathcal{S}_{\mu})\otimes \overline{\mathbb{Q}}_{\ell} \]
of $J_{b}(\mathbb{Q}_{p}) \times J_{b'}(\mathbb{Q}_{p}) \times W_{E}$-modules, where $\Sht(G,b,b',\mu)_{K,\mathbb{C}_{p}}$ is the base change of $\Sht(G,b,b',\mu)_{K}$ to $\mathbb{C}_{p}$. We have the following key result concerning this complex.
\begin{proposition} Suppose that $\mu$ is minuscule and $B(G,\mu,b)$ is Hodge-Newton reducible and let $b' \in B(G,\mu,b)$ be a non-basic element, then the cohomology $R\Gamma_{c}(G,b,b',\mu)$ is parabolically induced as a complex of $J_{b}(\mathbb{Q}_{p})$-modules. 
\end{proposition}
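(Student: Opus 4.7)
The plan is to reduce to a Harris-Viehmann-style statement in the Hodge-Newton reducible setting. First, I would use the inner-twist equivalence $\Bun_{G,\overline{\mathbb{F}}_{p}} \simeq \Bun_{J_{b},\overline{\mathbb{F}}_{p}}$ invoked just before the statement of the proposition: this identifies the diamond $\Sht(G,b,b',\mu)_{\infty}$ with the standard Shtuka space $\Sht(J_{b},1,b'',\mu)_{\infty}$, where $b'' \in B(J_{b},\mu)$ is the image of $b'$ under the bijection $B(G) \simeq B(J_{b})$ (equivalently, the class represented by $b'b^{-1}$), as $J_{b}(\mathbb{Q}_{p}) \times J_{b'}(\mathbb{Q}_{p}) \times W_{E}$-diamonds. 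Writing $H := J_{b}$, the task becomes to show that $R\Gamma_{c}(H,1,b'',\mu)$ is parabolically induced as a complex of $H(\mathbb{Q}_{p})$-modules, where $b''$ is non-basic and $B(H,\mu)$ is Hodge-Newton reducible by transport of the hypothesis on $B(G,\mu,b)$.

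Second, applying Hodge-Newton reducibility to the non-basic element $b'' \in B(H,\mu)$ furnishes a proper Levi $M \subset H$, a conjugacy class of $\mu$ inside $M$, and a lift $b''_{M} \in B(M,\mu)$ of $b''$. Fix a parabolic $P \subset H$ with Levi quotient $M$ such that the Newton vector $\nu_{b''}$ lies in the closed chamber associated to $P$. The key geometric input is the following Harris-Viehmann-style identification, which encodes Hodge-Newton decomposition on the Fargues-Fontaine curve: every modification $\mathcal{E}_{b''} \dashrightarrow \mathcal{E}_{1}$ of $H$-bundles with meromorphy bounded by the minuscule $\mu$ admits a canonical $P$-reduction extending the Hodge-Newton $P$-reduction of $\mathcal{E}_{b''}$, yielding an $H(\mathbb{Q}_{p})$-equivariant isomorphism of diamonds
\[ \Sht(H,1,b'',\mu)_{\infty} \simeq H(\mathbb{Q}_{p}) \times^{P(\mathbb{Q}_{p})} \Sht(M,1,b''_{M},\mu_{M})_{\infty}, \]
where $H(\mathbb{Q}_{p})$ acts by left multiplication on the first factor. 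This is the diamond analogue of Mantovan's product formula in the Rapoport-Zink setting.

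Finally, quotienting by varying level subgroups $\underline{K} \subset H(\mathbb{Q}_{p})$ and using the Iwasawa decomposition, together with the fact that compactly supported cohomology of a parabolically induced diamond computes smooth parabolic induction of the cohomology of the $M$-Shtuka tower, one passes to the colimit and obtains
\[ R\Gamma_{c}(H,1,b'',\mu) \simeq \Ind_{P(\mathbb{Q}_{p})}^{H(\mathbb{Q}_{p})} R\Gamma_{c}(M,1,b''_{M},\mu_{M}) \]
as complexes of $H(\mathbb{Q}_{p}) \times J_{b'}(\mathbb{Q}_{p}) \times W_{E}$-modules, establishing the desired parabolic induction statement. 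The main obstacle is the geometric isomorphism in the previous step, which is a concrete instance of the Harris-Viehmann conjecture: it can presumably be established either directly by analyzing the semi-infinite $M$-orbit stratification of the $B_{\mathrm{dR}}^{+}$-affine Grassmannian underlying the Shtuka space --- where the Hodge-Newton condition forces the relevant Schubert cell to be supported on a single $M$-orbit --- or by invoking the compatibility of the Fargues-Scholze Hecke operators with the constant term functors $\CT$ for $P$ developed in \cite[Section~IX]{FS}.
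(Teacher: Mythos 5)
Your proposal takes a genuinely different (and more ambitious) route than the paper, which is almost entirely a citation. The paper's proof is two sentences: since $\mu$ is minuscule, $\mathcal{S}_{\mu}$ is (up to shift and twist) the constant sheaf; by Gaisin--Imai \cite[Theorem~4.23]{IG}, the \emph{cohomology} of $R\Gamma_{c}(G,b,b',\mu)$ is parabolically induced as a $J_{b}(\mathbb{Q}_{p})$-module; and since parabolic induction is exact by second adjointness, this upgrades to a statement at the level of complexes. Your sketch, by contrast, attempts to prove the underlying geometric content directly: reduce via the inner-twist identification $\Bun_{G} \simeq \Bun_{J_{b}}$ to the trivial element, and establish a product-formula isomorphism $\Sht(H,1,b'',\mu)_{\infty} \simeq H(\mathbb{Q}_{p}) \times^{P(\mathbb{Q}_{p})} \Sht(M,1,b''_{M},\mu_{M})_{\infty}$. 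If that isomorphism held as stated you would get the complex-level conclusion for free, without the exactness observation, so in that sense your route is cleaner; but you acknowledge (``can presumably be established\ldots'') that you do not prove it, and it is precisely the nontrivial content of the cited theorem of Gaisin--Imai and of the related Harris--Viehmann results of Hansen and Chen--Fargues--Shen. So relative to the paper you have not closed a gap so much as unpacked what the black box says. Two cautions: first, the clean isomorphism of diamonds you assert is stronger than what is generally available --- what is typically proven is a stratification statement or a statement about cohomology, which is exactly why the paper needs the passage ``exact by second adjointness $\Rightarrow$ true at the level of complexes''; second, your reduction actually conflates the two ways Proposition 2.1 gets used later (once with $b=1$ for the $G(\mathbb{Q}_{p})$-action, once with $b$ the fixed basic element and $\mu^{-1}$ for the $J_{b}(\mathbb{Q}_{p})$-action), so some care is needed to check that the $P$-reduction you produce lives on the side of the modification carrying the $J_{b}$-action rather than the $J_{b'}$-action.
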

\begin{proof}
In the case that $\mu$ is minuscule, we have an isomorphism: $\mathcal{S}_{\mu} \simeq \mathbb{Z}_{\ell}[d](\frac{d}{2})$. It follows from \cite[Theorem~4.23]{IG} that the cohomology of $R\Gamma_{c}(G,b,b',\mu)$ is parabolically induced as a $J_{b}(\mathbb{Q}_{p})$-module. However, since parabolic induction is exact by second adjointness, it follows that this is also true at the level of complexes.  
\end{proof}
To $\mu$ and its dominant inverse $\mu^{-1}$, we consider the Hecke operators
\[ T_{\mu}, T_{\mu^{-1}}: \Dlis(\Bun_{G},\overline{\mathbb{Q}}_{\ell}) \rightarrow \Dlis(\Bun_{G},\overline{\mathbb{Q}}_{\ell})^{BW_{E}} \]
defined by mutually dual highest weight representations of highest weight $\mu$ and $\mu^{-1}$ of the Langlands dual group, as defined in \cite[Section~IX.2]{FS}. We can deduce the following consequence for the support of these Hecke operators.
\begin{corollary}
For a basic local Shimura datum $(G,b,\mu)$ such that $B(G,\mu)$ is Hodge-Newton reducible and $\pi$ (resp. $\rho$) a supercuspidal representation of  $G(\mathbb{Q}_{p})$ (resp. $J_{b}(\mathbb{Q}_{p})$), the objects
\[ T_{\mu^{-1}}j_{1!}(\mathcal{F}_{\pi}) \]
and
\[ T_{\mu}j_{b!}(\mathcal{F}_{\rho}) \]
in $\Dlis(\Bun_{G},\overline{\mathbb{Q}}_{\ell})^{BW_{E}}$ only have non-zero restriction to the open HN-stratum $\Bun_{G}^{b}$  and $\Bun_{G}^{1}$ defined $b$ and the trivial element $1$, respectively. Here $j_{b}: \Bun_{G}^{b} \hookrightarrow \Bun_{G}$ and $j_{1}: \Bun_{G}^{1} \hookrightarrow \Bun_{G}$ are open immersions of the HN-strata defined by $b$ and $1$, respectively, and $\mathcal{F}_{\pi}$ (resp. $\mathcal{F}_{\rho}$) are the lisse-\'etale $\overline{\mathbb{Q}}_{\ell}$-sheaves defined by $\pi$ (resp. $\rho$) on $\Bun_{G}^{1}$ (resp. $\Bun_{G}^{b}$).
\end{corollary}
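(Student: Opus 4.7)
The plan is to deduce both claims from Proposition~2.1 together with the Fargues-Scholze identification of stalks of Hecke operators with Shtuka cohomology. Concretely, for any $b' \in B(G)$, the construction in \cite[Section~IX.3]{FS} supplies a natural isomorphism
\[ j_{b'}^{*}\, T_{\mu^{-1}} j_{1!}(\mathcal{F}_{\pi}) \;\simeq\; R\Gamma_{c}(G, 1, b', \mu) \otimes^{\mathbb{L}}_{\mathcal{H}(G)} \pi \]
in $\Dlis(\Bun_{G}^{b'}, \overline{\mathbb{Q}}_{\ell})^{BW_{E}}$, which specializes at $b' = b$ to the identification $j_{b}^{*} T_{\mu^{-1}} j_{1!}(\mathcal{F}_{\pi}) \simeq R\Gamma_{c}(G, b, \mu)[\pi]$ already recalled in the introduction.

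Given this, the first assertion proceeds in three steps. First, the usual support analysis of the Hecke correspondence shows that $j_{b'}^{*} T_{\mu^{-1}} j_{1!}(\mathcal{F}_{\pi}) = 0$ unless $b' \in B(G, \mu, 1) = B(G, \mu)$, since $V_{\mu^{-1}}$ constrains the modifications parametrized by $T_{\mu^{-1}}$ to have meromorphy bounded by $\mu$. Second, for non-basic $b' \in B(G, \mu)$, Proposition~2.1 applied with $b = 1$ (so that $J_{1} = G$) shows that $R\Gamma_{c}(G, 1, b', \mu)$ is parabolically induced from a proper parabolic of $G$ as a complex of $G(\mathbb{Q}_{p})$-modules. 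Third, since a supercuspidal irreducible $\pi$ lies in a Bernstein component disjoint from those hit by proper parabolic inductions, the Bernstein decomposition of $\mathcal{H}(G)$ together with the projectivity of $\pi$ in its block forces the derived tensor product $R\Gamma_{c}(G, 1, b', \mu) \otimes^{\mathbb{L}}_{\mathcal{H}(G)} \pi$ to vanish. Hence only $\Bun_{G}^{b}$ survives in the support.

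For the second statement, the argument is symmetric. The analogous identification
\[ j_{b'}^{*}\, T_{\mu}\, j_{b!}(\mathcal{F}_{\rho}) \;\simeq\; R\Gamma_{c}(G, b, b', \mu^{-1}) \otimes^{\mathbb{L}}_{\mathcal{H}(J_{b})} \rho \]
reduces matters to $b' \in B(G, \mu^{-1}, b)$, where the shift from $T_{\mu^{-1}}$ to $T_{\mu}$ is responsible for passing from $\mu$ to $\mu^{-1}$ in the Shtuka meromorphy. By Remark~2.1, Hodge-Newton reducibility of $B(G, \mu)$ implies that of $B(G, \mu^{-1}, b)$, so Proposition~2.1 applies to the pair $(b, \mu^{-1})$ and shows that $R\Gamma_{c}(G, b, b', \mu^{-1})$ is parabolically induced as a complex of $J_{b}(\mathbb{Q}_{p})$-modules for non-basic $b' \in B(G, \mu^{-1}, b)$. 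Supercuspidality of $\rho$ kills the $\rho$-isotypic component as before, and since the unique basic element of $B(G, \mu^{-1}, b)$ is $1 \in B(G)$, only $\Bun_{G}^{1}$ contributes. The only substantive input is the identification of Hecke stalks with Shtuka cohomology; once this is in hand, the Corollary follows mechanically from Proposition~2.1 and the defining property of supercuspidal representations.
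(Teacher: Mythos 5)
Your proposal is correct and follows essentially the same route as the paper: identify the stalks of the Hecke operators over each HN-stratum with (the $\pi$- or $\rho$-isotypic parts of) Shtuka cohomology, constrain the support to $B(G,\mu)$ resp.\ $B(G,\mu^{-1},b)$, and then use Proposition~2.1 together with supercuspidality to kill the non-basic strata. Your vanishing step phrased via the Bernstein decomposition and projectivity of $\pi$ in its block is an equivalent formulation of the paper's appeal to the derived adjunction between parabolic induction and the Jacquet functor.
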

\begin{proof}
We start with the statement for $\pi$. It follows by \cite[Proposition~A.9]{R} that the locus where $T_{\mu^{-1}}j_{1!}(\mathcal{F}_{\pi})$ is supported is precisely $B(G,\mu) \subset B(G) \simeq |\Bun_{G}|$, where $|\Bun_{G}|$ is the underlying topological space of $\Bun_{G}$. For any $b ' \in B(G)$ it follows by the analysis in \cite[Section~IX.3]{FS} that we have an isomorphism 
\[ j_{b'}^{*}T_{\mu^{-1}}j_{1!}(\mathcal{F}_{\pi}) \simeq R\Gamma_{c}(G,b',\mu)[\pi] \]
as complexes of $J_{b'}(\mathbb{Q}_{p}) \times W_{E}$-modules. 
However, by Proposition 2.1, we know that the complex $R\Gamma_{c}(G,b',\mu)$ is parabolically induced as a $G(\mathbb{Q}_{p})$-module. Therefore, since $\pi$ is supercuspidal, it follows that $R\Gamma_{c}(G,b',\mu)[\pi] := R\Gamma_{c}(G,b,\mu) \otimes^{\mathbb{L}} \pi$ must be trivial, by the adjunction between the Jacquet module and parabolic induction in the derived category. For $\rho$, since $b$ is basic, the sheaf
\[ T_{\mu}j_{b!}(\mathcal{F}_{\rho}) \]
will be supported on $B(G,\mu^{-1},b)$ by \cite[Corollary~A.10]{R}. It then follows by \cite[Corollary~4.15]{CFS} that this is also Hodge-Newton reducible, as in Remark 2.1. The restrictions of $T_{\mu}j_{b!}(\mathcal{F}_{\rho})$ to all non-basic HN-stratum $b' \in B(G,\mu^{-1},b)$ are isomorphic to $R\Gamma_{c}(G,b,b',\mu^{-1}) \otimes^{\mathbb{L}}_{\mathcal{H}(J_{b})} \rho$. However, by Proposition 2.1, we know that the cohomology of $R\Gamma_{c}(G,b,b',\mu^{-1})$ is parabolically induced as a $J_{b}(\mathbb{Q}_{p})$-module and therefore the restriction vanishes, since $\rho$ is supercuspidal. 
\end{proof}
From this, we conclude Theorem 1.1. 
\begin{theorem}
For a basic local Shimura datum $(G,b,\mu)$ such that $B(G,\mu)$ is Hodge-Newton reducible and $\pi$ (resp. $\rho$) a supercuspidal representation of $G(\mathbb{Q}_{p})$ (resp. $J_{b}(\mathbb{Q}_{p})$), we have  isomorphisms
\[ \mathbb{D}_{J_{b}}(R\Gamma_{c}(G,b,\mu)[\pi]) \simeq R\Gamma_{c}(G,b,\mu)[\mathbb{D}_{G}(\pi)] \]
and 
\[ \mathbb{D}_{G}(R\Gamma_{c}(G,b,\mu)[\rho]) \simeq R\Gamma_{c}(G,b,\mu)[\mathbb{D}_{J_{b}}(\rho)] \]
as complexes of $J_{b}(\mathbb{Q}_{p}) \times W_{E}$ and $G(\mathbb{Q}_{p}) \times W_{E}$-modules, respectively. 
\end{theorem}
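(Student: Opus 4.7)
The plan is to reinterpret the cohomology complexes as composites of geometric operations on $\Bun_G$, apply the Bernstein--Zelevinsky duality $\mathbb{D}_{BZ}$ of \cite{FS}, and commute it past each operation in turn. Following \cite[Section~IX.3]{FS}, one has identifications $R\Gamma_c(G,b,\mu)[\pi] \simeq j_b^* T_{\mu^{-1}} j_{1!}(\mathcal{F}_\pi)$ and $R\Gamma_c(G,b,\mu)[\rho] \simeq j_1^* T_\mu j_{b!}(\mathcal{F}_\rho)$ as complexes of $J_b(\mathbb{Q}_p) \times W_E$-modules and $G(\mathbb{Q}_p) \times W_E$-modules respectively. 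This reduces each statement to computing $\mathbb{D}_{BZ}$ of the corresponding right-hand side.

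Next I would invoke three compatibilities of $\mathbb{D}_{BZ}$ established by Fargues--Scholze. First, on each basic HN-stratum $\Bun_G^1$ and $\Bun_G^b$, the functor $\mathbb{D}_{BZ}$ restricts to the classical Zelevinsky involutions $\mathbb{D}_G$ and $\mathbb{D}_{J_b}$ under the equivalences $\Bun_G^1 \simeq [\ast/\underline{G(\mathbb{Q}_p)}]$ and $\Bun_G^b \simeq [\ast/\underline{J_b(\mathbb{Q}_p)}]$, so in particular $\mathbb{D}_{BZ}(\mathcal{F}_\pi) \simeq \mathcal{F}_{\mathbb{D}_G(\pi)}$ and $\mathbb{D}_{BZ}(\mathcal{F}_\rho) \simeq \mathcal{F}_{\mathbb{D}_{J_b}(\rho)}$. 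Second, $\mathbb{D}_{BZ}$ commutes with the shriek extensions $j_{1!}$ and $j_{b!}$ from these open strata. Third, $\mathbb{D}_{BZ}$ commutes with the Hecke operators $T_{\mu^{\pm 1}}$, exploiting the duality $V_{\mu^{-1}} \simeq V_\mu^*$ of highest-weight representations of the Langlands dual group.

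The remaining and critical step is to commute $\mathbb{D}_{BZ}$ past the open restrictions $j_b^*$ and $j_1^*$. In general such a commutation fails: if the sheaf has nontrivial cohomology on the closed complement, then dualizing and then restricting can differ from restricting and then dualizing. This is precisely where Corollary 2.1 enters. Under Hodge--Newton reducibility of $B(G,\mu)$ and supercuspidality of $\pi$ (respectively $\rho$), Corollary 2.1 shows that $T_{\mu^{-1}} j_{1!}(\mathcal{F}_\pi)$ and $T_\mu j_{b!}(\mathcal{F}_\rho)$ are supported entirely on the open basic HN-strata $\Bun_G^b$ and $\Bun_G^1$ respectively. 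Restriction to these open substacks is therefore an equivalence on the relevant sheaves, and in particular commutes with $\mathbb{D}_{BZ}$. Combining the three steps yields $\mathbb{D}_{J_b}(R\Gamma_c(G,b,\mu)[\pi]) \simeq j_b^* T_{\mu^{-1}} j_{1!} \mathcal{F}_{\mathbb{D}_G(\pi)} \simeq R\Gamma_c(G,b,\mu)[\mathbb{D}_G(\pi)]$, and analogously for the $\rho$-statement. The main obstacle is this last commutation with open restriction; all other ingredients are formal consequences of the general theory of $\mathbb{D}_{BZ}$, and only this step requires the specific support-theoretic input provided by Hodge--Newton reducibility together with supercuspidality.
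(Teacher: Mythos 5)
Your proposal is correct and follows essentially the same route as the paper: reinterpret the complexes via $j_b^* T_{\mu^{-1}} j_{1!}(\mathcal{F}_\pi)$ (and the $\rho$-analogue), invoke the Fargues--Scholze compatibilities of $\mathbb{D}_{BZ}$ with $j_!$, with Hecke operators, and with the classical Zelevinsky involution on basic strata, and use Corollary~2.2 (Hodge--Newton reducibility plus supercuspidality) to ensure the sheaf lives entirely on the open basic stratum so that $\mathbb{D}_{BZ}$ can be commuted past $j_b^*$. Your phrasing of the last step as ``restriction is an equivalence on the relevant subcategory'' is a clean way to package the same argument the paper makes more directly (that $\mathbb{D}_{J_b}j_b^* \simeq j_b^*\mathbb{D}_{BZ}$ on such sheaves, via $j_{b!}\mathbb{D}_{J_b}j_b^* = \mathbb{D}_{BZ}j_{b!}j_b^*$ and the support identity $j_{b!}j_b^*A \simeq A$).
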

\begin{proof}
We explain the proof for the $\pi$-isotypic part, with the proof for the $\rho$-isotypic part being analogous. As in the previous proof, we start with the isomorphism
\[ j_{b}^{*}T_{\mu^{-1}}j_{1!}(\mathcal{F}_{\pi}) \simeq R\Gamma_{c}(G,b,\mu)[\pi] \]
of complexes of $J_{b}(\mathbb{Q}_{p}) \times W_{E}$-modules. Acting by $\mathbb{D}_{J_{b}}$, we have an isomorphism:  
\[ \mathbb{D}_{J_{b}}j_{b}^{*}(T_{\mu^{-1}})j_{1!}(\mathcal{F}_{\pi}) \simeq \mathbb{D}_{J_{b}}R\Gamma_{c}(G,b,\mu)[\pi] \]
Let $\mathbb{D}_{BZ}: \Dlis(\Bun_{G},\overline{\mathbb{Q}}_{\ell})^{\omega,op} \rightarrow \Dlis(\Bun_{G},\overline{\mathbb{Q}}_{\ell})^{\omega}$ be the geometric incarnation of the Zelevinsky involution, as defined in \cite[Section~V.5,Section~VII.7]{FS}. By Corollary 2.2 and \cite[Proposition~VII.7.6]{FS}, it follows that LHS of the previous equation is isomorphic to:
\[ j_{b}^{*}(\mathbb{D}_{BZ}T_{\mu^{-1}})j_{1!}(\mathcal{F}_{\pi}) \]
Now, by \cite[Theorem~I.7.2]{FS} and \cite[Proposition~VII.7.6]{FS}, $\mathbb{D}_{BZ}$ also commutes with the Hecke operator $T_{\mu^{-1}}$ and acts on $j_{1!}(\mathcal{F}_{\pi})$ via $\mathbb{D}_{G}$, respectively; therefore, we can rewrite the LHS as
\[ j_{b}^{*}T_{\mu^{-1}}j_{1!}(\mathcal{F}_{\mathbb{D}_{G}(\pi)}) \]
which, as before, is isomorphic to
\[ R\Gamma_{c}(G,b,\mu)[\mathbb{D}_{G}(\pi)] \]
as a complex of $J_{b}(\mathbb{Q}_{p}) \times W_{E}$-modules. All in all, we obtain the desired identification. 
\end{proof}
\section{Applications}
We start by recalling some facts about the Zelevinsky involution. Set $r$ to be the split rank of the center of $G$. We let $\chi: Z(G) \rightarrow \overline{\mathbb{Q}}_{\ell}^{*}$ be a central character and $I_{\chi}(G)$ be the set of equivalence classes of inertial types $(M,\sigma)$, where $M$ is a proper Levi subgroup of $G$ and $\sigma$ is a supercuspidal representation of $M$ with $\sigma|_{Z(G)} \simeq \chi$. We write $\Rep_{\chi}(G)$ for the category of finite-length smooth representations with central character equal to $\chi$. We consider the Bernstein-decomposition \cite[Theorem~VI.7.2]{Ren}:
\[ \Rep_{\chi}(G) = \bigoplus_{\mathfrak{s} \in I_{\chi}(G)} \Rep_{\mathfrak{s}}(G) \]
For $\mathfrak{s} \in I_{\chi}(G)$,  we let $i(\mathfrak{s})$ be the integer defined in Section $1$. We then have the following key result of Schneider-Stuhler. 
\begin{proposition}{\cite[Theorem~III.3.1]{SS1}}
For $\mathfrak{s} \in I_{\chi}(G)$ and $\pi \in \Rep_{\mathfrak{s}}(G)$ viewed as an element of $\D(G(\mathbb{Q}_{p}),\overline{\mathbb{Q}}_{\ell})$ sitting in degree $0$, the cohomology of the complex $\mathbb{D}_{G}(\pi)$ is concentrated only in degrees $-i(\mathfrak{s}) + r$. Moreover, if $\mathfrak{s}^{*} := (M,\sigma^{*}) \in I_{\chi^{-1}}(G)$ then it takes values in the Bernstein component $\Rep_{\mathfrak{s}^{*}}(G)$ of $\Rep_{\chi^{-1}}(G)$. 
\end{proposition}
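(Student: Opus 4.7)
The plan is to build an explicit finite-length projective resolution of $\pi$ from the action of $G(\mathbb{Q}_{p})$ on the reduced Bruhat-Tits building $\mathcal{B}^{\mathrm{red}} := \mathcal{B}(G^{\mathrm{ad}})$, apply $\mathcal{R}\mathrm{Hom}_{\mathcal{H}(G)}(-,\mathcal{H}(G))$ term-by-term, and identify the result with a dualized coefficient system via Frobenius reciprocity.  Concretely, following Schneider-Stuhler, I would fix a level $e$ large relative to $\pi$ and attach to each facet $F \subset \mathcal{B}^{\mathrm{red}}$ a pro-$p$ subgroup $U_{F}^{(e)} \subset G(\mathbb{Q}_{p})$ and the stabilizer $G_{F}^{\dagger}$.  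The coefficient system $F \mapsto c\text{-}\Ind_{G_{F}^{\dagger}}^{G}(\pi^{U_{F}^{(e)}})$ with its canonical restriction maps assembles into a cellular chain complex whose augmentation to $\pi$ is exact; since each $\pi^{U_{F}^{(e)}}$ is finite-dimensional and each $c\text{-}\Ind_{G_{F}^{\dagger}}^{G}(-)$ lands in the projectives of smooth $G(\mathbb{Q}_{p})$-modules, this is a bounded projective resolution of $\pi$ of length $\dim \mathcal{B}^{\mathrm{red}} = r_{G}$.

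Next I would apply $\mathrm{Hom}_{\mathcal{H}(G)}(-,\mathcal{H}(G))$ termwise.  The key identification
\[
\mathrm{Hom}_{\mathcal{H}(G)}\bigl(c\text{-}\Ind_{H}^{G} V,\;\mathcal{H}(G)\bigr) \;\simeq\; c\text{-}\Ind_{H}^{G}(V^{\vee})
\]
for $H$ open compact and $V$ finite-dimensional follows from Frobenius reciprocity together with self-duality of the regular bimodule $\mathcal{H}(G)$.  Consequently, the dualized complex is precisely the Schneider-Stuhler cellular chain complex attached to the smooth contragredient $\pi^{\vee}$, with homological degree reversed and shifted by $r_{G}$.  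The outer functor $\infty_{\mathcal{D}}$ is inert here since each term of the complex is already smooth, so $H^{j}(\mathbb{D}_{G}(\pi))$ matches the $(r_{G}-j)$-th homology of this dual coefficient system.

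Finally, I would restrict to the Bernstein block $\mathfrak{s} = (M,\sigma)$.  Cuspidality of $\sigma$ combined with compatibility of parahoric fixed vectors with Jacquet restriction forces the $\mathfrak{s}$-component of $\pi^{U_{F}^{(e)}}$ to vanish on all facets $F$ whose parahoric quotient is not of ``Levi type $M$''.  The $\mathfrak{s}$-relevant facets form a subcomplex of $\mathcal{B}^{\mathrm{red}}$ of dimension exactly $i(\mathfrak{s})$, and after a Poincar\'e-type shift by $r$ accounting for the split central torus quotiented out in the passage $\mathcal{B}(G) \to \mathcal{B}^{\mathrm{red}}$, the homology of the dual coefficient system restricted to $\mathfrak{s}$ is concentrated in the single degree $-i(\mathfrak{s}) + r$ advertised in the statement.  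The Bernstein-component identification is then automatic, since contragredient sends $(M,\sigma)$ to $(M,\sigma^{*}) = \mathfrak{s}^{*}$.  The hardest part is this final combinatorial input: proving that the $\mathfrak{s}$-relevant subcomplex of $\mathcal{B}^{\mathrm{red}}$ really is $i(\mathfrak{s})$-dimensional with vanishing reduced homology outside the top degree.  This is the delicate heart of the Schneider-Stuhler argument and uses Bernstein's second adjointness together with the contractibility of appropriate ``unipotent star'' subcomplexes of the building attached to the Levi $M$.
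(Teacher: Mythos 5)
The paper does not actually prove this proposition: it is quoted from Schneider--Stuhler \cite[Theorem~III.3.1]{SS1}, and the only verification the text sketches is the alternative route through Theorem 3.2, i.e.\ the identification $\mathrm{DL}(A^{*})\simeq \mathbb{D}_{G}(A)[r]$, after which concentration in one degree and the Bernstein-component claim reduce to Aubert's analysis of the finite complex with terms $i_{P}^{G}r_{P}^{G}$ on a fixed block. Your proposal instead reconstructs the original building-theoretic argument. Its first two steps are essentially the correct Schneider--Stuhler architecture: the augmented oriented chain complex of $F\mapsto c\text{-}\mathrm{Ind}_{G_{F}^{\dagger}}^{G}(\pi^{U_{F}^{(e)}})$ does resolve $\pi$ for $e$ large, and termwise $\mathrm{Hom}_{\mathcal{H}(G)}(-,\mathcal{H}(G))$ does return, by Frobenius reciprocity, the compactly supported cochain complex of the analogous coefficient system for $\pi^{\vee}$. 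One bookkeeping caveat already here: $G_{F}^{\dagger}$ is only compact modulo center, so these terms are projective only relative to a fixed central character; resolving the remaining $\mathbb{Z}^{r}$ of central directions is exactly where the shift by $r$ enters, not a ``Poincar\'e-type shift'' coming from the passage to the reduced building.

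The genuine gap is your final step. That the dualized complex has cohomology in a single degree is the entire content of the theorem and does not follow from the resolution property (the chain complex and the cochain complex of the same coefficient system are not interchangeable). The mechanism you invoke --- that the $\mathfrak{s}$-component of $\pi^{U_{F}^{(e)}}$ vanishes off an ``$\mathfrak{s}$-relevant subcomplex'' of dimension $i(\mathfrak{s})$ --- is not a statement in \cite{SS1} and is not obviously meaningful: the spaces $\pi^{U_{F}^{(e)}}$ are representations of the finite groups $G_{F}^{\dagger}/U_{F}^{(e)}$ and carry no canonical decomposition indexed by Bernstein components of $G$, and the facets of a fixed parahoric type are not closed under passage to faces, hence do not form a subcomplex of $\mathcal{B}^{\mathrm{red}}$ whose homology one could invoke. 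What Schneider--Stuhler actually do is prove the vanishing of $\mathrm{Ext}^{j}_{\mathcal{H}(G)}(\pi,\mathcal{H}(G))$ outside one degree by (i) an explicit computation in the cuspidal case, where projectivity/injectivity relative to the central character leaves only the $r$ central directions, and (ii) a compatibility of $\mathcal{R}\mathrm{Hom}_{\mathcal{H}(G)}(-,\mathcal{H}(G))$ with parabolic induction and Jacquet restriction (second adjointness), which transports the cuspidal computation for $\sigma$ on $M$ to all of $\Rep_{\mathfrak{s}}(G)$ and accounts for the extra shift by $i(\mathfrak{s})=r_{G}-r_{M}$, the split rank of $Z(M)$ relative to $Z(G)$ --- no combinatorics of subcomplexes is needed or available. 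If you want a self-contained argument in the spirit of this paper, the shorter route is via Theorem 3.2: the terms $i_{P}^{G}r_{P}^{G}(\pi^{*})$ of $\mathrm{DL}(\pi^{*})$ vanish unless $P$ contains a conjugate of $M$, these functors preserve cuspidal support up to contragredient (giving the $\mathfrak{s}^{*}$ claim), and the single-degree concentration is Aubert's theorem.
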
 
This gives us the following key definition.
\begin{definition}
For $\pi \in \Rep_{\mathfrak{s}}(G)$ as above, we define $Zel(\pi) \in \Rep_{\mathfrak{s}^{*}}(G)$ to be the $-i(\mathfrak{s}) + r$-cohomology of $\mathbb{D}_{G}(\pi)$. When pre-composed with the contragradient map, $Zel(-)$ defines an exact categorical equivalence of $\Rep_{\mathfrak{s}}(G)$. In particular, it takes irreducible objects to irreducible objects. 
\end{definition}
One can verify Proposition 3.1 by computing $\mathbb{D}_{G}$ explicitly in terms of the following Deligne-Luzstig complex
\[ \mathrm{DL}(\pi) := 0 \rightarrow \pi \rightarrow \bigoplus_{P} i_{P}^{G}r_{P}^{G} \rightarrow \ldots \rightarrow  i_{B}^{G}r_{B}^{G}(\pi) \rightarrow 0   \]
where $i_{P}^{G}$ and $r_{P}^{G}$ are the parabolic induction and Jacquet functors associated to a parabolic subgroup $P$, and the $i$th term of the complex runs over conjugacy classes of parabolics of corank $i$. The differentials of the complex are an appropriate alternating sum of adjunction maps (via transitivity of parabolic induction). 
If one has a complex $A \in \D(G(\mathbb{Q}_{p}),\overline{\mathbb{Q}}_{\ell})$ then one gets a bi-complex $\mathrm{DL}(A)$, and thereby a functor $\mathrm{DL}: \D(G(\mathbb{Q}_{p}),\overline{\mathbb{Q}}_{\ell}) \rightarrow \D(G(\mathbb{Q}_{p}),\overline{\mathbb{Q}}_{\ell})$. We have the following theorem. 
\begin{theorem}{\cite{SS1,B,BBK}}
For $A \in \D(G(\mathbb{Q}_{p}),\overline{\mathbb{Q}}_{\ell})$ with admissible cohomology we have a quasi-isomorphism
\[ \mathrm{DL}(A^{*}) \simeq \mathbb{D}_{G}(A)[r] \]
where $A^{*}$ denotes the smooth contragradient. 
\end{theorem}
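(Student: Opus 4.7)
The plan is to exhibit an explicit quasi-isomorphism $\mathrm{DL}(A^{*}) \simeq \mathbb{D}_{G}(A)[r]$ by computing the right hand side via a resolution of the bimodule $\mathcal{H}(G)$ adapted to the parabolic structure of $G$. First I would reduce to the case $A = \pi$ is a single admissible smooth representation placed in degree $0$: both $\mathrm{DL}((-)^{*})$ and $\mathbb{D}_{G}(-)[r]$ are triangulated functors which preserve the full subcategory of complexes with admissible cohomology, and a standard truncation argument reduces the general statement to this case.

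For such a $\pi$, the main input is Bernstein's second adjointness,
\[ \RHom_{\mathcal{H}(G)}(i_{P}^{G}\tau,\, \mathcal{H}(G)) \simeq \RHom_{\mathcal{H}(M)}(\tau,\, r_{P^{-}}^{G}\mathcal{H}(G)), \]
combined with the identification of $r_{P^{-}}^{G}\mathcal{H}(G)$ with a modular-character twist of the regular $\mathcal{H}(M)$-bimodule. Applied to an acyclic resolution of $\pi$ built from the coefficient system on the (reduced) Bruhat-Tits building of $G$ in the style of Schneider-Stuhler, this produces a double complex whose degeneration is $\mathrm{DL}(\pi^{*})$ up to the shift by $r$. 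The shift by the split rank $r$ of $Z(G)$ reflects the difference between the full affine building of $G$ (of dimension $r_{G}+r$) and the length $r_{G}$ of the DL complex: the $r$ central directions contribute a trivial Koszul factor whose cohomology is concentrated in degree $r$.

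The main obstacle is the careful bookkeeping of signs, shifts, and modular twists. The normalized Jacquet functor $r_{P}^{G}$ differs from $r_{P^{-}}^{G}$ by the modular character $\delta_{P}$, and the orientation sheaf on the building introduces further signs in the differentials of the Schneider-Stuhler resolution. Identifying the resulting boundary maps with the adjunction maps defining $\mathrm{DL}$ requires Zelevinsky's identity $(i_{P}^{G}\sigma)^{*} \simeq i_{P^{-}}^{G}(\sigma^{*})$ together with Casselman's second adjointness. The cited references \cite{SS1}, \cite{B}, and \cite{BBK} handle these precise computations, and I would invoke their analyses to conclude.
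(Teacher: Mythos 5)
The paper does not actually prove this statement: it is stated as a citation to \cite{SS1,B,BBK} with no argument supplied, and the text moves directly to Lemma 3.3 as a consequence. Your sketch is therefore not being measured against an in-paper proof but against the literature you both invoke, and since your outline itself concludes by deferring to those same references, the two ``approaches'' coincide in practice.

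That said, a couple of remarks on the content of your sketch. The main ingredients you name --- second adjointness, the Schneider--Stuhler coefficient-system resolution on the building, and the reduction to a single admissible $\pi$ in degree $0$ --- are indeed the correct ones, and the explanation of the shift by $r$ as the contribution of the split central torus (the affine building has dimension $r_G + r$ while the Deligne--Lusztig complex has length $r_G$) is the right heuristic. One point is imprecise: the object $r_{P^{-}}^{G}\mathcal{H}(G)$ is not a modular twist of the regular $\mathcal{H}(M)$-bimodule. Taking the Jacquet functor of $C_c^\infty(G)$ on one side produces (coinvariants of) $C_c^\infty(G/N^{-})$, which as an $M\times G$-module is a compactly induced object, not $\mathcal{H}(M)$; the correct statement one needs (and which \cite{BBK} prove) is an identification at the level of $\RHom$'s, roughly $\RHom_{\mathcal{H}(G)}(i_P^G\tau,\mathcal{H}(G)) \simeq i_{P^{-}}^{G}\RHom_{\mathcal{H}(M)}(\tau,\mathcal{H}(M))$ up to twist. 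Your formula as written would not compile into the Deligne--Lusztig complex without this extra step of pushing the parabolic induction back out. Since you explicitly hand the bookkeeping off to the references, this is a gap in exposition rather than a fatal error, but it is the crux of why the theorem is nontrivial and is worth stating precisely.
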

For example, using this, we can immediately deduce the following lemma.
\begin{lemma}
Let $\pi$ be a supercuspidal representation of $G(\mathbb{Q}_{p})$ then we have an isomorphism
\[ \mathbb{D}_{G}(\pi) \simeq \pi^{*}[-r] \]
as $G(\mathbb{Q}_{p})$-modules. 
\end{lemma}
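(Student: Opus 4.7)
The plan is to apply Theorem 3.1 to $A = \pi$ regarded as an object of $\D(G(\mathbb{Q}_p),\ov{\mathbb{Q}}_\ell)$ sitting in degree $0$. Since $\pi$ is an irreducible smooth representation of a reductive $p$-adic group, it is admissible, and so the hypothesis of Theorem 3.1 is satisfied. We therefore obtain a quasi-isomorphism
\[ \mathrm{DL}(\pi^{*}) \simeq \mathbb{D}_{G}(\pi)[r], \]
and it suffices to show that the Deligne-Lusztig complex $\mathrm{DL}(\pi^{*})$ collapses to $\pi^{*}$ placed in degree $0$.

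The first step is to observe that the smooth contragradient $\pi^{*}$ of a supercuspidal representation is again supercuspidal. By the defining property of supercuspidal representations, this means that the Jacquet module $r_{P}^{G}(\pi^{*})$ vanishes for every proper parabolic subgroup $P \subsetneq G$.

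The second step is to unfold the explicit description of $\mathrm{DL}(\pi^{*})$ given in the excerpt. The $i$th term of the complex is $\bigoplus_{P} i_{P}^{G}r_{P}^{G}(\pi^{*})$, where the sum runs over conjugacy classes of parabolics of corank $i$. For $i \geq 1$ every such $P$ is proper, hence by the previous paragraph all these terms vanish. The only surviving term is the one at $i = 0$, corresponding to $P = G$, which contributes $i_{G}^{G}r_{G}^{G}(\pi^{*}) = \pi^{*}$. Thus $\mathrm{DL}(\pi^{*}) \simeq \pi^{*}$ concentrated in degree $0$.

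Combining the two steps yields $\mathbb{D}_{G}(\pi)[r] \simeq \pi^{*}$, that is, $\mathbb{D}_{G}(\pi) \simeq \pi^{*}[-r]$, as claimed. There is no real obstacle here; the only point to watch is that one is genuinely free to invoke Theorem 3.1, which in turn depends on the identification of $\mathbb{D}_{G}$ with a shift of the Deligne-Lusztig functor on the contragradient (this identification is the nontrivial input, but it is given to us by the cited works \cite{SS1,B,BBK}).
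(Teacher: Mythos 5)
Your proof is correct and is exactly the argument the paper intends when it says the lemma can be "immediately deduced" from the Deligne--Lusztig description: the contragredient $\pi^{*}$ of a supercuspidal is supercuspidal, so all Jacquet modules $r_{P}^{G}(\pi^{*})$ for proper $P$ vanish, $\mathrm{DL}(\pi^{*})$ collapses to $\pi^{*}$ in degree $0$, and the quasi-isomorphism $\mathrm{DL}(\pi^{*}) \simeq \mathbb{D}_{G}(\pi)[r]$ gives the claim. The only cosmetic point is that the theorem you invoke is numbered 3.2 in the paper (Proposition 3.1 is the Schneider--Stuhler degree statement), but the content is the same.
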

Now let's consider the isomorphism supplied by Theorem 1.1
\begin{equation}
 \mathbb{D}_{J_{b}}(R\Gamma_{c}(G,b,\mu)[\pi]) \simeq R\Gamma_{c}(G,b,\mu)[\mathbb{D}_{G}(\pi)] \simeq R\Gamma_{c}(G,b,\mu)[\pi^{*}][-r] 
\end{equation}
where we have used Lemma 3.3. Now if $\chi$ denotes the central character of $\pi$ it follows that all smooth irreducible $J_{b}(\mathbb{Q}_{p})$-representations occurring in the complex $R\Gamma_{c}(G,b,\mu)[\pi]$ (resp. $R\Gamma_{c}(G,b,\mu)[\pi^{*}]$) have central character $\chi$ (resp. $\chi^{-1}$). This follows since the diagonally embedded center of $Z(J_{b})(\mathbb{Q}_{p}) \simeq Z(G)(\mathbb{Q}_{p})$ acts trivially on the space of modifications $\mathcal{E}_{b} \dashrightarrow \mathcal{E}_{0}$ with its $J_{b}(\mathbb{Q}_{p}) \times G(\mathbb{Q}_{p})$-action by automorphisms of bundles. Moreover, we know both sides of (1) take values in admissible representations of finite length, therefore we obtain a Bernstein decomposition of both sides:
\[ \bigoplus_{\mathfrak{s} \in I_{\chi}(J_{b})} \mathbb{D}_{J_{b}}(R\Gamma_{c}(G,b,\mu)[\pi]_{\mathfrak{s}}) \simeq \bigoplus_{\mathfrak{s} \in I_{\chi^{-1}}(J_{b})} R\Gamma_{c}(G,b,\mu)[\pi^{*}]_{\mathfrak{s}}[-r]  \]
Then, by Proposition 3.1, we see that, for a fixed $\mathfrak{s} = (M,\sigma) \in I_{\chi}(J_{b})$ with $\mathfrak{s}^{*} = (M,\sigma^{*}) \in I_{\chi^{-1}}(J_{b})$, this induces an isomorphism
\[ \mathbb{D}_{J_{b}}(R\Gamma_{c}(G,b,\mu)[\pi]_{\mathfrak{s}}) \simeq R\Gamma_{c}(G,b,\mu)[\pi^{*}]_{\mathfrak{s}^{*}}[-r] \]
of $J_{b}(\mathbb{Q}_{p}) \times W_{E}$-modules. Similarly, we deduce such an isomorphism for the $\rho$-isotypic part. We record this as a Corollary now.
\begin{corollary}
For a basic local Shimura datum $(G,b,\mu)$ such that $B(G,\mu)$ is Hodge-Newton reducible and $\pi$ (resp. $\rho$) a supercuspidal representation of $G(\mathbb{Q}_{p})$ (resp. $J_{b}(\mathbb{Q}_{p})$) with central character $\chi$, we have, for all $\mathfrak{s} = (M,\sigma) \in I_{\chi}(J_{b})$ (resp. $I_{\chi}(G)$) with $\mathfrak{s}^{*} = (M,\sigma^{\vee}) \in I_{\chi^{-1}}(J_{b})$ (resp. $I_{\chi^{-1}}(G)$), isomorphisms
\[ \mathbb{D}_{J_{b}}(R\Gamma_{c}(G,b,\mu)[\pi]_{\mathfrak{s}}) \simeq R\Gamma_{c}(G,b,\mu)[\pi^{*}]_{\mathfrak{s}^{*}}[-r]\]
and 
\[ \mathbb{D}_{G}(R\Gamma_{c}(G,b,\mu)[\rho]_{\mathfrak{s}}) \simeq R\Gamma_{c}(G,b,\mu)[\rho^{*}]_{\mathfrak{s}^{*}}[-r]\]
of $J_{b}(\mathbb{Q}_{p}) \times W_{E}$ and $G(\mathbb{Q}_{p}) \times W_{E}$-modules, respectively. 
\end{corollary}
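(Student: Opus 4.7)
The plan is to obtain Corollary 3.4 by decomposing both sides of the complex-level isomorphism from Theorem 1.1 along their $J_{b}$-theoretic Bernstein decompositions and matching components termwise. Since $\pi$ is supercuspidal, I may first apply Lemma 3.3 to replace $\mathbb{D}_{G}(\pi)$ by $\pi^{*}[-r]$ inside Theorem 1.1, producing the single complex-level identity
\[ \mathbb{D}_{J_{b}}(R\Gamma_{c}(G,b,\mu)[\pi]) \simeq R\Gamma_{c}(G,b,\mu)[\pi^{*}][-r] \]
as $J_{b}(\mathbb{Q}_{p}) \times W_{E}$-modules, which is equation (1) in the excerpt. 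The task reduces to projecting this isomorphism onto appropriate Bernstein blocks.

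Next I would pin down the central character on each side in order to license the use of the Bernstein decomposition with prescribed central character. The diagonal copy $Z(J_{b})(\mathbb{Q}_{p}) \simeq Z(G)(\mathbb{Q}_{p})$ acts trivially on the moduli of modifications $\mathcal{E}_{b} \dashrightarrow \mathcal{E}_{0}$, so the $J_{b}$-central character appearing in the cohomology of $R\Gamma_{c}(G,b,\mu)[\pi]$ must agree with the $G$-central character of $\pi$, namely $\chi$, and likewise the right-hand side has central character $\chi^{-1}$. Combined with the Fargues-Scholze finite-length admissibility statement cited in the introduction, I may split each side along the Bernstein decompositions $\Rep_{\chi}(J_{b}) = \bigoplus_{\mathfrak{s} \in I_{\chi}(J_{b})} \Rep_{\mathfrak{s}}(J_{b})$ and $\Rep_{\chi^{-1}}(J_{b}) = \bigoplus_{\mathfrak{s}' \in I_{\chi^{-1}}(J_{b})} \Rep_{\mathfrak{s}'}(J_{b})$.

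The final step is to identify summands on the two sides. By Proposition 3.1, $\mathbb{D}_{J_{b}}$ sends $\Rep_{\mathfrak{s}}(J_{b})$ into $\D(\Rep_{\mathfrak{s}^{*}}(J_{b}))$ with cohomology concentrated in a single degree depending only on $\mathfrak{s}$, so this prolongs to the derived level through the abelian direct sum $\D(\Rep_{\chi}(J_{b})) = \bigoplus_{\mathfrak{s}} \D(\Rep_{\mathfrak{s}}(J_{b}))$. Consequently the $\mathfrak{s}^{*}$-summand of the Bernstein decomposition of the left-hand side of (1) is precisely $\mathbb{D}_{J_{b}}(R\Gamma_{c}(G,b,\mu)[\pi]_{\mathfrak{s}})$, while the corresponding summand on the right is $R\Gamma_{c}(G,b,\mu)[\pi^{*}]_{\mathfrak{s}^{*}}[-r]$. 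Projecting (1) onto this block yields the first desired isomorphism, and running the analogous argument starting from the $\rho$-isotypic isomorphism of Theorem 1.1 gives the second. The main subtlety I anticipate is verifying that $\mathbb{D}_{J_{b}}$ respects the Bernstein block structure at the derived level and is compatible with the $W_{E}$-equivariance carried through from Theorem 1.1, but both points follow formally: the decomposition of $\Rep_{\chi}(J_{b})$ is abelian (so extends to a direct-sum decomposition of derived categories), and $W_{E}$ commutes with $J_{b}$ throughout the Fargues-Scholze framework used to produce the underlying isomorphism in the first place.
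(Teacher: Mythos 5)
Your argument is correct and follows essentially the same route as the paper: apply Lemma 3.3 to Theorem 1.1 to get the isomorphism $\mathbb{D}_{J_{b}}(R\Gamma_{c}(G,b,\mu)[\pi]) \simeq R\Gamma_{c}(G,b,\mu)[\pi^{*}][-r]$, use the trivial action of the diagonal center on the space of modifications to fix the central characters $\chi$ and $\chi^{-1}$ on the two sides, invoke finite-length admissibility to split each side along its Bernstein decomposition, and then use Proposition 3.1 to match the $\mathfrak{s}$-block on the left with the $\mathfrak{s}^{*}$-block on the right. Your extra remarks on the derived extension of the block decomposition and on $W_{E}$-equivariance only make explicit points the paper leaves implicit.
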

Now, with this in hand, we can deduce Theorem 1.2 by passing to cohomology.
\begin{theorem}
For a basic local Shimura datum $(G,b,\mu)$ such that $B(G,\mu)$ is Hodge-Newton reducible, let $\pi$ a supercuspidal representation of $G(\mathbb{Q}_{p})$ with central character $\chi$, $\mathfrak{s} \in I_{\chi}(J_{b})$ with associated integer $i(\mathfrak{s})$, $\rho$ a smooth irreducible representation with supercuspidal support given by $\mathfrak{s}$, and $\sigma$ an irreducible representation of $W_{E}$. Then $\pi \boxtimes \rho \boxtimes \sigma$ occurs as a subquotient of $H^{q}(R\Gamma_{c}(G,b,\mu))$ if and only if $\pi^{*} \boxtimes Zel(\rho) \boxtimes \sigma^{*}$ occurs as a subquotient of $H^{-q + i(\mathfrak{s})}(R\Gamma_{c}(G,b,\mu))$. The analogous statement is true with the roles of $\pi$ and $\rho$ interchanged. 
\end{theorem}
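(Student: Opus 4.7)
The plan is to obtain Theorem 3.5 as a purely cohomological consequence of Corollary 3.4 combined with the Schneider-Stuhler concentration result Proposition 3.1.

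First, I would reduce the subquotient statement to one about the $(\pi,\mathfrak{s},\sigma)$-isotypic parts of $R\Gamma_{c}(G,b,\mu)$. The projections onto the $\pi$-, $\mathfrak{s}$-, and $\sigma$-isotypic components are all exact (since $\pi$ is supercuspidal, $\sigma$ is irreducible, and the Bernstein projection is exact) and pairwise commute, because they act on independent factors of the $G(\mathbb{Q}_{p}) \times J_{b}(\mathbb{Q}_{p}) \times W_{E}$-action. Moreover, $\mathbb{D}_{J_{b}} = \infty_{\mathcal{D}} \circ \mathcal{R}\mathrm{Hom}_{\mathcal{H}(J_{b})}(-,\mathcal{H}(J_{b}))$ is $W_{E}$-equivariant and intertwines the $W_{E}$-action with its contragredient, so the $\sigma$-isotypic part of the duality lines up with the $\sigma^{*}$-isotypic part of the source. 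The claim therefore reduces to showing that $\rho$ appears as a $J_{b}(\mathbb{Q}_{p})$-subquotient of $H^{q}(R\Gamma_{c}(G,b,\mu)[\pi]_{\mathfrak{s}})$ if and only if $Zel(\rho)$ appears as a subquotient of $H^{-q+i(\mathfrak{s})}(R\Gamma_{c}(G,b,\mu)[\pi^{*}]_{\mathfrak{s}^{*}})$.

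Next I would apply Corollary 3.4 to identify $\mathbb{D}_{J_{b}}(R\Gamma_{c}(G,b,\mu)[\pi]_{\mathfrak{s}})$ with $R\Gamma_{c}(G,b,\mu)[\pi^{*}]_{\mathfrak{s}^{*}}[-r]$ and pass to cohomology on both sides. The technical input is the following reformulation of Proposition 3.1: when restricted to complexes with cohomology in $\Rep_{\mathfrak{s}}(J_{b})$, the functor $\mathbb{D}_{J_{b}}$ is, up to a cohomological shift by an integer $\kappa$ depending only on $i(\mathfrak{s})$ and $r$, the derived extension of the exact contravariant functor $Zel$ of Definition 3.1. Indeed, the hyperspectral sequence $E_{2}^{p,q} = R^{p}\mathbb{D}_{J_{b}}(H^{-q}(A)) \Rightarrow H^{p+q}(\mathbb{D}_{J_{b}}(A))$ degenerates at $E_{2}$, because Proposition 3.1 forces $R^{p}\mathbb{D}_{J_{b}}$ applied to any object of $\Rep_{\mathfrak{s}}(J_{b})$ to vanish outside a single degree $p = \kappa$, yielding the formula $H^{n}(\mathbb{D}_{J_{b}}(A)) \simeq Zel(H^{\kappa-n}(A))$. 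Equating this with $H^{n-r}(R\Gamma_{c}(G,b,\mu)[\pi^{*}]_{\mathfrak{s}^{*}})$ coming from Corollary 3.4 and reindexing via $q = \kappa - n$ produces the desired degree shift $n - r = -q + i(\mathfrak{s})$ matching the theorem.

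The only minor obstacle is the degeneration of this spectral sequence, which is immediate from Proposition 3.1, and the exactness of $Zel$ on each Bernstein block, which is standard. The symmetric statement, with the roles of $\pi$ and $\rho$ interchanged, then follows in exactly the same way from the second isomorphism of Corollary 3.4.
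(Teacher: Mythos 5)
Your proposal is essentially identical to the paper's proof: both reduce to the $\pi$-isotypic and $\mathfrak{s}$-Bernstein-block pieces (using exactness/projectivity coming from $\pi$ supercuspidal), invoke Corollary 3.4 to identify $\mathbb{D}_{J_b}$ of one block with a shift of the other, and then use Proposition 3.1 to collapse the spectral sequence so that $\mathbb{D}_{J_b}$ acts degree-by-degree by $Zel$. The only cosmetic differences are that you apply Corollary 3.4 to $(\pi,\mathfrak{s})$ rather than $(\pi^*,\mathfrak{s}^*)$, and you make the hyper-cohomology spectral sequence explicit where the paper simply says ``since $\mathbb{D}_{J_b}$ is contravariant, it follows by Proposition 3.1''; one small caution is that you leave the value of the shift $\kappa$ unchecked, and nailing it down (so that $\kappa - q - r$ really equals $-q + i(\mathfrak{s})$) is exactly the one place a sign in Proposition 3.1 must be verified against Lemma 3.3.
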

\begin{proof}
We explain the proof for $\pi$ supercuspidal with the proof for $\rho$ supercuspidal being the same. We apply Corollary 3.4 to the contragradient $\pi^{*}$ and the Bernstein-component $\mathfrak{s}^{*}$, this gives us an identification:
\[ \mathbb{D}_{J_{b}}(R\Gamma_{c}(G,b,\mu)[\pi^{*}]_{\mathfrak{s}^{*}}) \simeq R\Gamma_{c}(G,b,\mu)[\pi]_{\mathfrak{s}}[-r] \]
Now, since $\pi$ is supercuspidal and therefore injective/projective in the category of representations with fixed central character, it follows that $\pi \boxtimes \rho \boxtimes \sigma$ occurring as a subquotient of $H^{q}(R\Gamma_{c}(G,b,\mu))$ is equivalent to $\rho \boxtimes \sigma$ occurring as a sub-quotient of $H^{q}(R\Gamma_{c}(G,b,\mu)[\pi])$. By the above isomorphism, this is equivalent to $\rho \boxtimes \sigma$ occurring as a sub-quotient of $H^{q + r}(\mathbb{D}_{J_{b}}(R\Gamma_{c}(G,b,\mu)[\pi^{*}]_{\mathfrak{s}^{*}})$. However, since $\mathbb{D}_{J_{b}}$ is contravariant, it follows, by Proposition 3.1, that this occurs if and only if 
$Zel(\rho) \boxtimes \sigma^{*}$ occurs as a sub-quotient of $H^{-q + i(\mathfrak{s})}(R\Gamma_{c}(G,b,\mu)[\pi^{*}])$, which is in turn equivalent to $\pi^{*} \boxtimes Zel(\rho) \boxtimes \sigma^{*}$ occurring as a sub-quotient of $H^{-q + i(\mathfrak{s})}(R\Gamma_{c}(G,b,\mu))$, as desired.
\end{proof}
\printbibliography 
\end{document}